\DeclareMathAlphabet{\mathbfsl}{OT1}{ppl}{b}{it} 
\newcommand{\deff}{\mbox{$\stackrel{\rm def}{=}$}}
\newcommand{\Span}[1]{{\left\langle {#1} \right\rangle}}
 \DeclareRobustCommand{\nsbinom}{\genfrac[]\z@{}}
 \newcommand{\sbinom}[2]{\nsbinom{{#1}}{{#2}}}
 \newcommand{\sbinomq}[2]{\nsbinom{{#1}}{{#2}}_{q}}
  \newcommand{\sbinomtwo}[2]{\nsbinom{{#1}}{{#2}}_{2}}
\newcommand{\field}[1]{\mathbb{#1}}
\newcommand{\A}{\field{A}}
\newcommand{\dB}{\field{B}}
\newcommand{\F}{\field{F}}
\newcommand{\dS}{\field{S}}
\newcommand{\T}{\field{T}}
\newcommand{\cN}{{\cal N}}
\newcommand{\cG}{{\cal G}}
\newcommand{\C}{{\mathbb C}}
\newcommand{\linadd}{\kern1pt\mbox{\small$\boxplus$}\kern1pt}
\newtheorem{defn}{Definition}
\newtheorem{theorem}{Theorem}
\newtheorem{lemma}{Lemma}
\newtheorem{cor}{Corollary}
\newtheorem{example}{Example}
\begin{document}

\bibliographystyle{plain}

\title{
\begin{center}
A New Approach to Examine $q$-Steiner Systems
\end{center}
}
\author{
{\sc Tuvi Etzion}\thanks{Department of Computer Science, Technion,
Haifa 32000, Israel, e-mail: {\tt etzion@cs.technion.ac.il}.}}

\maketitle

\begin{abstract}
One of the most intriguing problems, in $q$-analogs of designs and codes, is the existence question of
an infinite family of $q$-analog of Steiner systems (spreads not included) in general,
and the existence question for the $q$-analog of the Fano plane in particular.

We exhibit a completely new method to attack this problem. In the process we define
a new family of designs whose existence is implied by the
existence of $q$-Steiner systems, but could exist even if the
related $q$-Steiner systems do not exist.

The method is based on a possible system obtained by puncturing
all the subspaces of the $q$-Steiner system several times.
We define the punctured system as a new type of design and
enumerate the number of subspaces of various types that it might have.
It will be evident
that its existence does not imply the existence of the related $q$-Steiner system.
On the other hand, this type of design demonstrates how close can we get to
the related $q$-Steiner system.

Necessary conditions for the existence
of such designs are presented. These necessary conditions will be also necessary conditions
for the existence of the related $q$-Steiner system.
Trivial and nontrivial direct constructions
and a nontrivial recursive construction for such designs, are given.
Some of the designs have a symmetric structure, which is uniform in
the dimensions of the existing subspaces in the system. Most constructions are based
on this uniform structure of the design or its punctured designs.
Finally, the structure of the $q$-Fano plane for any given $q$, was
considered based on this new approach.
\end{abstract}

\vspace{0.5cm}

\noindent {\bf Keywords:} puncturing, $q$-analog, spreads, $q$-Fano plane, $q$-Steiner systems.

\vspace{0.5cm}

\noindent
{\bf Mathematics Subject Classification}: 94B25, 05B40, 51E10  .

\footnotetext[1] { This research was supported in part by the Israeli
Science Foundation (ISF), Jerusalem, Israel, under
Grant 10/12.}

\newpage
\section{Introduction}

Let $\F_q$ be the finite field with $q$ elements
and let $\F_q^n$ be the set of all vectors
of length $n$ over~$\F_q$. $\F_q^n$ is a vector
space with dimension $n$ over $\F_q$. For a given integer $k$,
$0 \leq k \leq n$, let $\cG_q(n,k)$ denote the set of all
$k$-dimensional subspaces ($k$-\emph{subspaces} in short) of $\F_q^n$. $\cG_q(n,k)$ is often
referred to as a Grassmannian. It is well known that
$$ \begin{small}
| \cG_q (n,k) | = \sbinomq{n}{k}
\deff \frac{(q^n-1)(q^{n-1}-1) \cdots
(q^{n-k+1}-1)}{(q^k-1)(q^{k-1}-1) \cdots (q-1)}
\end{small}
$$
where $\sbinomq{n}{k}$ is the $q$-\emph{binomial coefficient} (known also
as the \emph{Gaussian coefficient}~\cite[pp. 325-332]{vLWi92}).

A \emph{Grassmannian code} (known better as a \emph{constant dimension code})
$\C$ is a subset of
$\cG_q(n,k)$. In recent years there has been an increasing
interest in Grassmannian codes as a result of their
application to error-correction in random network coding which was
demonstrated in the seminal work by Koetter and Kschischang~\cite{KoKs08}.
This work has motivated lot of research on coding for
Grassmannian codes (see for example~\cite{EtSi13,EtVa11}
and references therein). But, the
interest in these codes has been also before this application,
since Grassmannian codes are the $q$-analogs of the well
studied constant weight codes~\cite{BSSS}. The Grassmann scheme is the $q$-analog of
the Johnson scheme, where $q$-analogs replace concepts of subsets by concepts of subspaces when
problems on sets are transferred to problems on subspaces
over the finite field $\F_q$.
For example, the size of a set is replaced by the dimension of a subspace,
the binomial coefficients are replaced by the Gaussian coefficients, etc.
One example of such $q$-analog problem in coding theory is the nonexistence of nontrivial perfect codes
in the Grassmann scheme which was proved in~\cite{Chi87,MaZh95}. This problem is the $q$-analog for the nonexistence
problem of perfect codes in the Johnson scheme, which is a well-known
open problem~\cite{Del73,Etz96,EtSc04}. Also, the $q$-analogs of other various
combinatorial objects are well known~\cite[pp. 325-332]{vLWi92}.
The work of Koetter and Kschischang~\cite{KoKs08} has motivated also
increasing interest and lot of research work on these related $q$-analog of designs
(see for example~\cite{BEOVW,Etz14,EtVa11a} and references therein).
The most intriguing question is the existence of $q$-analog for Steiner system
which is the topic of the research in this paper.

A \emph{Steiner system} $S(t,k,n)$ is a set $S$ of $k$-subsets (called \emph{blocks})
from an $n$-set~$\cN$ such that each $t$-subset of $\cN$ is contained
in exactly one block of $S$. Steiner systems were subject to an
extensive research in combinatorial designs~\cite{CoDi07}.
A Steiner system is
also equivalent to an optimal constant weight code in
the Hamming scheme. It is well-known that if a Steiner system
$S(t,k,n)$ exists, then for all $0 \leq i \leq t-1$,
$\frac{\binom{n-i}{t-i}}{\binom{k-i}{t-i}}$ must be integers.
It was proved only recently that these necessary conditions for the
existence of a Steiner system $S(t,k,n)$ are also sufficient for each $t$ and $k$
such that $0 < t <k$, except for
a finite number of values of~$n$~\cite{Kee14}.

Cameron~\cite{Cam74,Cam74a} and Delsarte~\cite{Del76} have
extended the notions of block design and Steiner systems
to vector spaces.
A \emph{$q$-Steiner system} $\dS_q(t,k,n)$ is a set $\dS$ of
$k$-subspaces of $\F_q^n$ (called \emph{blocks}) such that each $t$-subspace of $\F_q^n$
is contained in exactly one block of $\dS$.
A $q$-Steiner system $\dS_q(t,k,n)$ is an optimal constant dimension
code~\cite{EtVa11,EtVa11a}. Similarly, to Steiner systems, simple necessary divisibility conditions
for the existence of a given $q$-Steiner system were developed~\cite{ScEt,Suz90}.

\begin{theorem}
\label{thm:derived}
If a $q$-Steiner system $\dS_q (t,k,n)$ exists, then
for each $i$, $1 \leq i \leq t-1$, a $q$-Steiner system
$\dS_q(t-i,k-i,n-i)$ exists.
\end{theorem}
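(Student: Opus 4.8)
The plan is to produce the derived system by a quotient-space construction, the $q$-analog of deriving a classical Steiner system at a point. The classical fact that $S(t,k,n)$ yields $S(t-i,k-i,n-i)$ is obtained by fixing $i$ points, keeping the blocks through them, and deleting those points; over $\F_q$ the role of ``fixing points'' is played by fixing an $i$-dimensional subspace and passing to the quotient.

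First I would fix an arbitrary $i$-dimensional subspace $W \subseteq \F_q^n$ and pass to the quotient space $\F_q^n / W$, which has dimension $n-i$ and may be identified with $\F_q^{n-i}$. Writing $\pi : \F_q^n \to \F_q^n/W$ for the canonical projection, the key tool is the standard lattice correspondence: $U \mapsto \pi(U) = U/W$ is an inclusion-preserving bijection between the subspaces of $\F_q^n$ that contain $W$ and all subspaces of $\F_q^n/W$, and it drops dimension by exactly $i$ (its inverse sends $V$ to $\pi^{-1}(V)$, which automatically contains $W = \ker\pi$). In particular it matches the $t$-subspaces of $\F_q^n$ containing $W$ with the $(t-i)$-subspaces of $\F_q^{n-i}$, and the $k$-subspaces containing $W$ with the $(k-i)$-subspaces.

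Next I would define the candidate derived collection $\dS' \deff \{\, B/W : B \in \dS,\ W \subseteq B \,\}$, a family of $(k-i)$-subspaces of $\F_q^{n-i}$, and verify the Steiner property. Given any $(t-i)$-subspace $T'$ of $\F_q^{n-i}$, its preimage $T \deff \pi^{-1}(T')$ is a $t$-subspace of $\F_q^n$ containing $W$. Since $\dS$ is a $q$-Steiner system $\dS_q(t,k,n)$, there is exactly one block $B \in \dS$ with $T \subseteq B$; because $W \subseteq T \subseteq B$, this $B$ contributes $B/W$ to $\dS'$, and $T' = T/W \subseteq B/W$. For uniqueness, any member $B'/W \in \dS'$ containing $T'$ pulls back under the inclusion-preserving inverse correspondence to a block $B' \in \dS$ with $T \subseteq B'$ and $W \subseteq B'$; the uniqueness of the covering block in $\dS$ then forces $B' = B$, so $B'/W = B/W$. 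Hence each $(t-i)$-subspace lies in exactly one member of $\dS'$, and $\dS'$ is a $\dS_q(t-i,k-i,n-i)$.

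I expect no serious obstacle here; the entire content lies in the quotient correspondence, and the only points demanding care are (a) checking that $\pi$ genuinely induces an inclusion- and codimension-respecting bijection on the relevant subspaces, and (b) the two-way argument that the single block of $\dS$ over the lift $T$ matches the single block of $\dS'$ over $T'$. Note that the theorem only asserts \emph{existence}, so any one choice of $W$ suffices. Alternatively, the statement can be obtained by proving only the case $i=1$ (quotient by a single $1$-subspace) and iterating, since applying that special case to $\dS_q(t-i,k-i,n-i)$ yields $\dS_q(t-i-1,k-i-1,n-i-1)$; the direct quotient by an $i$-subspace simply performs all these steps at once.
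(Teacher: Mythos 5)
Your proof is correct and is essentially the paper's own mechanism: the paper imports this theorem from~\cite{ScEt} without proof, but its Theorem~\ref{thm:Sd1} establishes the $i=1$ case by puncturing the last coordinate of exactly those blocks containing $e_n$, which is precisely your quotient by $W=\Span{e_n}$ written in coordinates, with Lemma~\ref{lem:addOne} playing the role of your lattice correspondence (the unique lift of a $(t-1)$-subspace of $\F_q^{n-1}$ to a $t$-subspace of $\F_q^n$ containing $W$). Your version merely phrases the construction coordinate-free and handles general $i$ in a single quotient rather than by iterating the $i=1$ case.
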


\begin{cor}
\label{cor:ness}
If a $q$-Steiner system $\dS_q(t,k,n)$ exists, then for all $0 \leq i \leq t-1$,
$$
\frac{\sbinomq{n-i}{t-i}}{\sbinomq{k-i}{t-i}}
$$
must be integers.
\end{cor}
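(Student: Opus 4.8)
The plan is to observe that, for each admissible $i$, the quantity in question is literally the number of blocks of some $q$-Steiner system, and a block count is necessarily a non-negative integer. The case $i=0$ can be handled directly from a counting identity, while the cases $1 \le i \le t-1$ will be reduced to that identity by invoking Theorem~\ref{thm:derived}.

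First I would establish the counting identity for a generic $q$-Steiner system $\dS_q(t,k,n)$. Counting incident pairs $(T,B)$, where $T \in \cG_q(n,t)$ is a $t$-subspace, $B \in \dS$ is a block, and $T \subseteq B$: since every $t$-subspace lies in exactly one block, the number of such pairs is $\sbinomq{n}{t}$; on the other hand, every block is a $k$-subspace and hence contains exactly $\sbinomq{k}{t}$ distinct $t$-subspaces, so the number of pairs is $|\dS| \cdot \sbinomq{k}{t}$. Equating the two expressions gives $|\dS| = \sbinomq{n}{t} / \sbinomq{k}{t}$, and since $|\dS|$ is a number of blocks it is a non-negative integer. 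This settles the claim for $i=0$.

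For $1 \le i \le t-1$, I would appeal to Theorem~\ref{thm:derived}: the existence of $\dS_q(t,k,n)$ guarantees the existence of the derived $q$-Steiner system $\dS_q(t-i,k-i,n-i)$. Applying the counting identity of the previous paragraph to this derived system, with the parameters $(t,k,n)$ replaced by $(t-i,k-i,n-i)$, shows that its number of blocks equals $\sbinomq{n-i}{t-i} / \sbinomq{k-i}{t-i}$, which is therefore a non-negative integer. As $i$ ranges over $1,\dots,t-1$ this yields all the remaining divisibility conditions.

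The argument is essentially routine once Theorem~\ref{thm:derived} is in hand, so I do not expect any real obstacle beyond the two standard facts used above: that a $k$-subspace contains exactly $\sbinomq{k}{t}$ of its $t$-subspaces (the $q$-analog of $\binom{k}{t}$), and that the defining property of a $q$-Steiner system makes the incidence count between $t$-subspaces and blocks exact rather than merely bounded. The only point deserving care is the indexing: the value $i=0$ is \emph{not} covered by Theorem~\ref{thm:derived} (which requires $i \ge 1$) and so must be treated separately as above, whereas for $i \ge 1$ the derived system is well defined precisely because the restriction $i \le t-1$ keeps $t-i \ge 1$.
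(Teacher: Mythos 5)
Your proof is correct and follows exactly the intended route: the paper states this corollary as an immediate consequence of Theorem~\ref{thm:derived} (citing the block-counting identity from the literature rather than spelling it out), and your double-counting argument for $i=0$ combined with the derived systems for $1 \le i \le t-1$ is precisely that argument. No issues.
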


While a lot of information is known about the existence of Steiner systems~\cite{CoDi07,Kee14}, our knowledge
about the existence of $q$-Steiner systems is quite limited.
Until recently, the only known $q$-Steiner systems $\dS_q(t,k,n)$ were either trivial or
for $t=1$, where such systems exist if and only if $k$ divides $n$.
These systems are known as \emph{spreads} in finite geometries and they will be used in Section~\ref{sec:recursion}
and will be considered also in Sections~\ref{sec:structure} and~\ref{sec:2punctured}.
Thomas~\cite{Tho96} showed that certain kind of $q$-Steiner
systems $\dS_2(2,3,7)$ cannot exist. Metsch~\cite{Met99} conjectured
that nontrivial $q$-Steiner systems with $t \geq 2$ do not exist.
The concept of $q$-Steiner systems appeared also in connection of diameter perfect
codes in the Grassmann scheme. It was proved in~\cite{AAK01}
that the only diameter perfect codes in the Grassmann scheme
are the $q$-Steiner systems.
Recently, the first $q$-Steiner system $\dS_q(t,k,n)$
with $t \geq 2$ was found.
This is a $q$-Steiner system $\dS_2(2,3,13)$ which have a large
automorphism group~\cite{BEOVW}. Using $q$-analog of derived and residual designs
it was proved that sometimes the necessary conditions for the existence
of a $q$-Steiner system $\dS_q(t,k,n)$ are not sufficient~\cite{KiLa15}.
The first set of parameters ($t$, $k$, and $n$) for which the existence question of $q$-Steiner systems
is not settled is the parameters for the $q$-analog
of the Fano plane, i.e. the $q$-Steiner systems $\dS_q(2,3,7)$, which
will be called also in this paper the $q$-\emph{Fano plane}. There was a lot of effort to
find whether the $q$-Fano plane, especially for $q=2$, exists or does not exist, e.g.~\cite{BKN15,EtVa11a,HeSi16,Tho96}
All these attempts didn't provide any answer to the existence question. It was proved recently
in~\cite{BKN15} that if such system exists for $q=2$, then its automorphism group has a small order.

In this paper we present a completely new approach to examine the existence of $q$-Steiner systems.
This approach is based on the structure obtained by puncturing some coordinates from all the subspaces of the possible
$q$-Steiner system. This is equivalent to say that the projection of the other coordinates is considered
for all the subspaces of the system. This idea was suggested first in~\cite{Etz13} and this paper
completes and proves all the ideas mentioned in~\cite{Etz13}. The approach will involve
sizes of punctured subspaces and the numbers involved are sometimes the same as
those in~\cite{KiPa15}, where intersection numbers of combinatorial designs were considered.
But, the object considered in these two papers are different and the results are different.
We consider (and define) this structure, obtained by puncturing all the subspaces
of the system, as a new type of design, which exists if
the related $q$-Steiner system exists, but could exist even if the
related $q$-Steiner system does not exist.
If this design does not exist,
then the related $q$-Steiner system does not exist.
To highlight, our main contributions in this paper are:
\begin{enumerate}
\item A definition of a new method to examine the existence of a $q$-Steiner system.

\item A definition for a new type of designs which are close to $q$-Steiner systems
and their construction might be a first step to find the related $q$-Steiner system.
Some constructions for these designs are given.

\item An analysis of the $q$-Fano plane for any $q>1$.

\item Improving our understanding of the structure of the $q$-Fano plane for $q \geq 2$,
and in particular for $q=2$,
which we hope will help to find such a structure or prove its nonexistence.
\end{enumerate}

The rest of this paper is organized as follows. In Section~\ref{sec:punctured},
the definition of punctured $q$-Steiner systems and other related definitions,
are presented. We prove some properties of punctured
systems, define the new type of design, and examine some of its properties.
An inverse operation for puncturing and related operations are also presented and the
number of subspaces which are generated by the inverse operation are computed.
In Section~\ref{sec:system}, a system of equations, which form the necessary
conditions for the existence of the related punctured $q$-Steiner system, is presented. These systems of equations are
obtained by precise enumeration of covering $t$-subspaces by $k$-subspaces in the the
$q$-Steiner system as reflected by the punctured system. In Section~\ref{sec:examples},
a sequence of examples for punctured $q$-Steiner systems $\dS_q(k-1,k,n)$,
i.e. a sequence of examples for the new defined design, is presented.
In Section~\ref{sec:recursion}, a recursive construction for punctured $q$-Steiner systems
$\dS_q(2,3,n)$ is presented. One of the important ingredients, for this construction, is a large set of spreads.
In Section~\ref{sec:structure},
we prove our main results concerning the structure of the $q$-Fano plane,
for any power of a prime $q$.
In Section~\ref{sec:2punctured}, we consider one possible
structure of the twice punctured $q$-Fano plane. The given construction is only one
of a few possibilities to construct this design. In Section~\ref{sec:conclude}, we conclude with suggestions
for future research on the directions to advance the knowledge on this problem and maybe how to settle
it for good.

\section{Punctured $q$-Steiner Systems}
\label{sec:punctured}

Given an $n \times m$ matrix $A$, the \emph{punctured} matrix $A'$ is an $n \times (m-1)$
matrix obtained from $A$ by deleting one of the columns from $A$. Codes and punctured
codes in the Hamming space are well established in coding theory~\cite[pp. 27-32]{McSl77}.
A $q$-analog of punctured codes for subspace codes (codes whose codewords are
subspaces such as the Grassmannian codes),
was defined in~\cite{EtSi09}, but this is not the puncturing considered in this paper.

A subspace $X \in \cG_q(n,k)$, i.e. a $k$-subspace of $\F_q^n$, consists of
$q^k$ vectors of length $n$ with elements taken from $\F_q$.
The punctured subspace $X'$ by the $i$th coordinate is defined
as the subspace obtained from $X$ by deleting coordinate $i$ in all the vectors of $X$.
The result of this puncturing is a new subspace of
$\F_q^{n-1}$. If $X$ does not contain the unity vector with an \emph{one} in the $i$th coordinate, $e_i$,
then $X'$ is a subspace in $\cG_q(n-1,k)$. If $X$ contains the unity vector with a \emph{one} in the $i$th coordinate,
then $X'$ is a subspace in $\cG_q(n-1,k-1)$.
Assume that we are given a set $\dS$ of subspaces from $\cG_q(n,k)$. The \emph{punctured set}
$\dS'$ is defined as $\dS' = \{ X' ~:~ X \in \dS \}$, where all subspaces are
punctured in the same coordinate, and it can contain subspaces only from
$\cG_q(n-1,k)$ or from $\cG_q(n-1,k-1)$. The set $\dS'$ is regarded as multi-set and hence $|\dS'|=|\dS|$
(since two distinct $k$-subspaces of $\F_q^n$ can be punctured into the same $k$-subspaces of $\F_q^{n-1}$).
When the punctured coordinate is not mentioned it will be assumed that the last coordinate was punctured.
A subspace can be punctured several times. A $k$-subspace $X$, of $\F_q^n$, is
punctured $p$ times (i.e. $p$-\emph{punctured}) to a $p$-\emph{punctured subspace} $Y$
of $\F_q^{n-p}$. The subspace $Y$ can be an $s$-subspace for any $s$
such that $\max \{ 0, k-p \} \leq s \leq \min \{ k, n-p \}$.
Similarly we define a $p$-\emph{punctured set}. We summarize this brief
introduction on punctured subspaces with the main observation.

\begin{lemma}
\label{lem:1-punctured}
A $k$-subspace of $\F_q^n$, $k >0$, is punctured either into a $k$-subspace or into a $(k-1)$-subspace of $\F_q^{n-1}$.
A $k$-subspace of $\F_q^n$ is $p$-punctured into an $s$-subspace such that
$\max \{ 0, k-p \} \leq s \leq \min \{ k, n-p \}$.
\end{lemma}

If $\dS$ is a $q$-Steiner system $\dS_q(t,k,n)$, we would like to know if the $p$-punctured system $\dS'$
has some interesting properties, i.e. $\dS'$ has some uniqueness properties
related to the punctured $k$-subspaces and the punctured (contained)
$t$-subspaces. The motivation is to define a new set of designs which must exist
if the related $q$-Steiner systems $\dS_q(t,k,n)$ exist. But, these new designs can exist even if the
related $q$-Steiner systems do not exist. If the nonexistence of such designs can be proved,
then the related $q$-Steiner systems won't exist too. On the other hand, the existence of such a design
might lead to a construction for the related $q$-Steiner system.

For puncturing there is an inverse operation called \emph{extension}.
A related operation for our discussion, which is completely of a different nature from extension, is
the \emph{expansion}. These two operations are defined next.

A $t$-subspace $X$ of $\F_q^m$ is \emph{extended} to a $t'$-subspace $Y$ of $\F_q^{m'}$, where
$t' \geq t$, $m'>m$, and $m'-m \geq t'-t$, if $X$ is the subspace obtained from $Y$ by
puncturing $Y$ $m'-m$ times. Note, that the \emph{extension} of a subspace is not always unique, as we will
prove in the sequel (see lemma~\ref{lem:addNone}), but there is always a unique outcome for puncturing. In other words, a
$p$-punctured $t$-subspace $X$ of $\F_q^m$
can be obtained from a few different $t'$-subspaces of $\F_q^{m+p}$.
The new columns are added as the last columns of extended subspace. But, similarly to
puncturing, they can added theoretically between any set of columns.
To make the paper consistent, we will make the extensions only to the end of the
columns of the related subspaces, unless otherwise is specifically stated.

To prove our claims we will need some general way to represent subspaces,
in such a way that the representation of the punctured subspace $X'$ will be derived directly
from the representation of the subspace $X$. For this purpose two different
representations will be used for a $k$-subspace $X$ of $\F_q^m$.
The first representation is by an $(q^k-1) \times m$ matrix which contains
the $q^k-1$ nonzero vectors of $X$. Each nonzero vector of $X$ is a row in this matrix.
The second representation is by
a $k \times m$ matrix which is the generator matrix for $X$ in reduced row echelon form.
A $k \times n$ matrix with rank $k$ is in {\it reduced row echelon
form} if the following conditions are satisfied.
\begin{itemize}
\item The leading coefficient of a row is always to the right of
the leading coefficient of the previous row.

\item All leading coefficients are {\it ones}.

\item Every leading coefficient is the only nonzero entry in its
column.
\end{itemize}

The next definition of virtual subspace is essential in understanding our exposition.
An $r$-subspace $Y$ of $\F_q^m$ is called a \emph{virtual} $k$-\emph{subspace} of $\F_q^m$,
$r \leq k$, if $Y$ was punctured from a $k$-subspace $X$
of $\F_q^n$, $n>m$ and it is represented by a $(q^k-1) \times m$ matrix which
represents the actual outcome when $Y$ was punctured. In other words the representation of $Y$ is
obtained by the deleting the last $n-m$ columns from the $(q^k-1) \times n$ matrix which
represents $X$. It is important to understand that $Y$ has exactly one representation
as a virtual $k$-subspace, no matter from which subspace it was punctured.
Using this representation, $Y$ has $\sbinomq{k}{t}$ $t$-subspaces for each
$0 \leq t \leq k$, some of them are identical and some of them are virtual (see Example~\ref{ex:virtual}).
This will be used later in our enumerations.
Note, that with the virtual $k$-subspace of the punctured subspace, it is easier to see how
the $t$-subspaces were punctured. This is easily seen in Example~\ref{ex:virtual}.
Finally, note that a $k$-subspace of $\F_q^m$ is always also a virtual $k$-subspace
of $\F_q^m$, where both have the same matrix representation.

An $r$-subspace $X$ of $\F_q^m$ is \emph{expanded} to a (virtual) $k$-subspace $Y$ of $\F_q^m$, $r \leq k$, if
$X$ can be obtained by puncturing a $k$-subspace $Z$, and $Y$ is the representation of $X$
by a $(q^k-1) \times m$ matrix after the
puncturing. The virtual $k$-subspace $Y$ is also called the $k$-\emph{expansion} of $X$.


\begin{lemma}[\bf from a $t$-subspace to a $t$-subspace, one extension]
\label{lem:addNone}
If $X$ is a $t$-subspace of $\F_q^m$, then it can be extended
in exactly $q^t$ distinct ways to a $t$-subspaces of $\F_q^{m+1}$.
\end{lemma}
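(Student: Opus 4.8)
The plan is to recast extension as a lifting problem and then exhibit an explicit bijection between the admissible extensions and the linear functionals on $X$. Since we append a new last coordinate, let $\pi\colon\F_q^{m+1}\to\F_q^m$ be the projection deleting that coordinate, so that puncturing a subspace of $\F_q^{m+1}$ is exactly taking its image under $\pi$. A subspace $Y\in\cG_q(m+1,t)$ is then an extension of $X$ precisely when $\pi(Y)=X$. As both $X$ and $Y$ have dimension $t$ and $\pi(Y)=X$, the map $\pi|_Y\colon Y\to X$ is a surjection between spaces of equal dimension, hence an isomorphism; equivalently $Y\cap\Span{e_{m+1}}=\{0\}$, i.e.\ $e_{m+1}\notin Y$, which is consistent with Lemma~\ref{lem:1-punctured} (were $e_{m+1}\in Y$, puncturing would drop the dimension to $t-1$). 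So the task reduces to counting the $t$-subspaces of $\F_q^{m+1}$ that project isomorphically onto $X$.

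First I would fix the generator matrix $G$ of $X$ in reduced row echelon form, with rows $g_1,\dots,g_t$. For an admissible $Y$, each $g_i$ has a unique preimage $\tilde g_i=(g_i,c_i)\in Y$ under $\pi|_Y$, with $c_i\in\F_q$, and $\tilde g_1,\dots,\tilde g_t$ form a basis of $Y$. Thus $Y$ is the row space of $[\,G\mid c\,]$, where $c=(c_1,\dots,c_t)^{\mathsf T}$; equivalently $Y=\{(v,\phi(v)):v\in X\}$ is the graph of the linear functional $\phi\colon X\to\F_q$ determined by $\phi(g_i)=c_i$. This assigns to every extension a column $c\in\F_q^t$. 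Conversely, for each $c\in\F_q^t$ the matrix $[\,G\mid c\,]$ has rank $t$ (as $G$ does), its row space $Y_c$ punctures to $X$, and $e_{m+1}\notin Y_c$ because the only row combination whose first $m$ entries vanish is the trivial one; hence $Y_c$ is a genuine extension.

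The one point that needs care — and the only real obstacle — is the injectivity of $Y\mapsto c$, that is, that distinct columns give distinct subspaces (no overcounting). For this I would show that if $[\,G\mid c\,]$ and $[\,G\mid c'\,]$ have the same row space, then some invertible $t\times t$ matrix $P$ satisfies $P[\,G\mid c\,]=[\,G\mid c'\,]$, so in particular $PG=G$; since $G$ has full row rank this forces $(P-I)G=0$ and then $P=I$, whence $c=c'$. With the correspondence shown to be a bijection onto $\F_q^t$ (equivalently onto the dual space $X^{*}$), the number of distinct extensions is exactly $|\F_q^t|=q^t$, as claimed.
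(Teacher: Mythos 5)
Your proof is correct and takes essentially the same route as the paper: the paper's one-line argument appends any of the $q^t$ distinct linear combinations of the columns of $X$ as the new $(m+1)$th column, which is exactly your parametrization of the extensions by linear functionals on $X$ (equivalently, by columns $c\in\F_q^t$ adjoined to the generator matrix). The only difference is that you spell out the injectivity of $Y\mapsto c$, which the paper asserts without proof.
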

\begin{proof}
Any one of the $q^t$ distinct linear combinations of the columns of $X$ can be appended as the new $(m+1)$th column and
each such linear combination, appended as the $(m+1)$th column, yields a different $t$-subspace of $\F_q^{m+1}$.
\end{proof}

\begin{example}
Let $X$ be the following 2-subspace of $\F_2^4$ represented by the $3 \times 4$ matrix,
$$
X=\begin{array}{c}
0100 \\
0010 \\
0110
\end{array}~,
$$
It can be extended in exactly 4 distinct ways to 2-subspaces of $\F_2^5$, represented by the $3 \times 5$ matrices,
$$
\begin{array}{c}
01000 \\
00100 \\
01100
\end{array}, ~~
\begin{array}{c}
01001 \\
00100 \\
01101
\end{array},~~
\begin{array}{c}
01000 \\
00101 \\
01101
\end{array},~~
\begin{array}{c}
01001 \\
00101 \\
01100
\end{array} ~.
$$
\end{example}

\begin{lemma}[\bf from $t$-subspace to $(t+1)$-subspace, one extension]
\label{lem:addOne}
If $X$ is a $t$-subspace of $\F_q^m$, then it can be extended
in exactly one way to a $(t+1)$-subspace of $\F_q^{m+1}$.
\end{lemma}
\begin{proof}
Any extension which increase the dimension by one is equivalent to first adding a column of
zeroes to the existing $t$-subspace $X$ to form the $t$-subspace $\hat{X}$ of $\F_q^{m+1}$.
Then the only extension to a $(t+1)$-subspace is $\Span{\hat{X},e_{m+1}}$, where $\Span{Z}$
denotes the linear span of~$Z$.
\end{proof}

\begin{example}
\label{ex:virtual}
Let $X$ be the following 2-subspace of $\F_2^4$ represented by the $3 \times 4$ matrix,
$$
X=\begin{array}{c}
0100 \\
0010 \\
0110
\end{array} ~,
$$
and let
$$
Y=\begin{array}{c}
01000 \\
00100 \\
01100
\end{array}~,
$$
be one of its extensions to a 2-subspace of $\F_2^5$ represented by the $3 \times 5$ matrix.
$X$ can be extended in a unique way to a 3-subspace of $\F_2^5$, and $Y$ has a unique representation as
a virtual 3-subspace (up to permutations of rows). They are represented by the two $7 \times 5$ matrices, respectively,
$$
\begin{array}{c}
01000 \\
00100 \\
01100 \\
01001 \\
00101 \\
01101 \\
00001
\end{array}, ~~
\begin{array}{c}
01000 \\
00100 \\
01100 \\
01000 \\
00100 \\
01100 \\
00000
\end{array}~.
$$
The extended 3-subspace contains the following seven 2-subspaces
$$
\begin{array}{c}
01000 \\
00100 \\
01100
\end{array}, ~~
\begin{array}{c}
01000 \\
00101 \\
01101
\end{array},~~
\begin{array}{c}
00100 \\
01001 \\
01101
\end{array},~~
\begin{array}{c}
01100 \\
01001 \\
00101
\end{array},~~
\begin{array}{c}
01000 \\
01001 \\
00001
\end{array},~~
\begin{array}{c}
00100 \\
00101 \\
00001
\end{array},~~
\begin{array}{c}
01100 \\
01101 \\
00001
\end{array},
$$
while the virtual 3-subspace contains the following related seven 2-subspaces, from which three are virtual
(note that the distinction between the two sets of seven subspaces is the last column),
$$
\begin{array}{c}
01000 \\
00100 \\
01100
\end{array}, ~~
\begin{array}{c}
01000 \\
00100 \\
01100
\end{array},~~
\begin{array}{c}
00100 \\
01000 \\
01100
\end{array},~~
\begin{array}{c}
01100 \\
01000 \\
00100
\end{array},~~
\begin{array}{c}
01000 \\
01000 \\
00000
\end{array},~~
\begin{array}{c}
00100 \\
00100 \\
00000
\end{array},~~
\begin{array}{c}
01100 \\
01100 \\
00000
\end{array}~.
$$
\end{example}

Finally, in this section we consider the outcome of puncturing a $q$-Steiner systems
and some properties of the related puncturing. For this purpose we need to use
the following well-known equation~\cite[p. 444]{McSl77}.

\begin{lemma}
\label{eq:gauss1}
If $1 \leq k \leq n-1$, then $\sbinomq{n}{k} = q^k \sbinomq{n-1}{k} + \sbinomq{n-1}{k-1}$, where
$\sbinomq{n}{0} = \sbinomq{n}{n} =1$.
\end{lemma}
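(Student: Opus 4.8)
The statement is the $q$-analog of Pascal's identity, and the plan is to prove it combinatorially by reusing the extension machinery already developed in Lemma~\ref{lem:1-punctured}, Lemma~\ref{lem:addNone}, and Lemma~\ref{lem:addOne}, so that the recurrence becomes a count of the $k$-subspaces of $\F_q^n$ organized by how they behave under puncturing the last coordinate. First I would fix the distinguished vector $e_n = (0,\ldots,0,1) \in \F_q^n$ and split $\cG_q(n,k)$ into the two classes $\{W : e_n \notin W\}$ and $\{W : e_n \in W\}$. By the dichotomy of Lemma~\ref{lem:1-punctured} (and the discussion preceding it), puncturing the last coordinate sends a $k$-subspace $W$ to a $k$-subspace of $\F_q^{n-1}$ precisely when $e_n \notin W$, since then the puncture map is injective on $W$; it sends $W$ to a $(k-1)$-subspace of $\F_q^{n-1}$ precisely when $e_n \in W$, since then the kernel of the puncture map restricted to $W$ is $\Span{e_n}$.

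For the first class I would invoke Lemma~\ref{lem:addNone}: every $k$-subspace $W'$ of $\F_q^{n-1}$ extends in exactly $q^k$ distinct ways to a $k$-subspace of $\F_q^n$, and each such dimension-preserving extension does not contain $e_n$ by the kernel analysis above. Since puncturing is the inverse of extension and has a unique outcome, these extensions partition $\{W : e_n \notin W\}$ as $W'$ ranges over $\cG_q(n-1,k)$, yielding $q^k \sbinomq{n-1}{k}$ subspaces. For the second class I would invoke Lemma~\ref{lem:addOne}: every $(k-1)$-subspace of $\F_q^{n-1}$ extends in exactly one way to a $k$-subspace of $\F_q^n$, and that unique dimension-increasing extension is of the form $\Span{\hat{W}, e_n}$, which contains $e_n$. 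This gives a bijection between $\cG_q(n-1,k-1)$ and $\{W : e_n \in W\}$, hence $\sbinomq{n-1}{k-1}$ subspaces. Adding the two counts gives $\sbinomq{n}{k} = q^k \sbinomq{n-1}{k} + \sbinomq{n-1}{k-1}$, and the boundary values $\sbinomq{n}{0} = \sbinomq{n}{n} = 1$ are the usual conventions covering the edge cases $k=0$ and $k=n$.

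The main obstacle is to make the two correspondences genuinely bijective rather than merely count them: I must verify that the $q^k$ dimension-preserving extensions of a fixed $W'$ are distinct from one another (the content of Lemma~\ref{lem:addNone}) and that extensions of distinct $W'$ never coincide, so the puncturing map $W \mapsto W'$ is well defined and surjective with the stated fiber sizes. Equivalently, one checks that puncturing has a unique outcome, so that each $k$-subspace $W$ of $\F_q^n$ is accounted for exactly once according to whether $e_n \in W$ or $e_n \notin W$. Since this dichotomy is exhaustive and mutually exclusive, no subspace is counted twice and none is omitted, which closes the argument.

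A purely algebraic alternative is available as a fallback: using the closed form one has $\sbinomq{n}{k} = \sbinomq{n-1}{k-1}\cdot\frac{q^n-1}{q^k-1}$ and $\sbinomq{n-1}{k} = \sbinomq{n-1}{k-1}\cdot\frac{q^{n-k}-1}{q^k-1}$, so after factoring out $\sbinomq{n-1}{k-1}$ the identity reduces to $q^k(q^{n-k}-1) + (q^k-1) = q^n-1$, which is immediate. This is routine but less illuminating than the extension-based count, whose advantage is that it reuses exactly the puncturing framework on which the rest of the paper is built.
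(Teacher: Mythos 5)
Your proof is correct, but note that the paper does not actually prove this lemma at all: it simply quotes the identity as a well-known fact with a citation to MacWilliams and Sloane, so any proof you give is necessarily "a different route." Both of your arguments check out. The combinatorial one is the standard $q$-Pascal bijection, and you have correctly wired it into the paper's own machinery: partitioning $\cG_q(n,k)$ according to whether $e_n$ lies in the subspace, identifying the class with $e_n \in W$ bijectively with $\cG_q(n-1,k-1)$ (the unique dimension-raising extension of Lemma~\ref{lem:addOne}, since $W \supseteq \Span{e_n}$ forces $W = \pi^{-1}(W')$ for the projection $\pi$ killing the last coordinate), and fibering the class with $e_n \notin W$ over $\cG_q(n-1,k)$ with fibers of size $q^k$ (Lemma~\ref{lem:addNone}; equivalently, the $q^k$ complements of $\Span{e_n}$ inside the $(k+1)$-dimensional preimage $\pi^{-1}(W')$). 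The injectivity/surjectivity worries you raise are resolved exactly as you say, because puncturing is a well-defined single-valued inverse to extension. Your algebraic fallback, reducing after division by $\sbinomq{n-1}{k-1}$ to $q^k(q^{n-k}-1)+(q^k-1)=q^n-1$, is the one-line verification the cited reference essentially contains. What your combinatorial version buys, beyond what the paper's bare citation provides, is that it is literally the $(t,k)=(k,k)$ shadow of the counting performed later in the proof of Theorem~\ref{thm:Sd1}, so it makes the subsequent use of the identity there less ad hoc; the algebraic version is shorter but carries no such connection.
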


\begin{theorem}
\label{thm:Sd1}
If $\dS$ is a $q$-Steiner system $\dS_q(t,k,n)$, then the punctured system $\dS'$
contain exactly $\frac{\sbinomq{n-1}{t-1}}{\sbinomq{k-1}{t-1}}$ distinct $(k-1)$-subspaces
which form a $q$-Steiner system $\dS_q(t-1,k-1,n-1)$, denoted by $\tilde{\dS}$. Each $t$-subspace of $\F_q^{n-1}$
which is contained in a $(k-1)$-subspace of $\tilde{\dS}$ is not contained
in any of the $k$-subspaces of $\dS'$.
Each $t$-subspace of $\F_q^{n-1}$ which is not contained in a $(k-1)$-subspace of $\tilde{\dS}$,
appears exactly $q^t$ times in the $k$-subspaces of $\dS'$.
\end{theorem}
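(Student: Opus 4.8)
The plan is to realize single-coordinate puncturing as a linear projection and then reduce the statement to the classical derived-design argument together with a careful count of extensions. Let $\pi : \F_q^n \to \F_q^{n-1}$ be the map deleting the last coordinate, so $\ker \pi = \Span{e_n}$ and, for a block $X \in \dS$, the punctured subspace $X'$ equals $\pi(X)$. First I would split $\dS$ according to whether $e_n \in X$: if $e_n \in X$ then $\Span{e_n} = \ker\pi \subseteq X$ and $\pi(X)$ is a $(k-1)$-subspace, whereas if $e_n \notin X$ then $\pi|_X$ is injective and $\pi(X)$ is a $k$-subspace (this is Lemma~\ref{lem:1-punctured}). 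Thus $\tilde{\dS} := \{\pi(X) : X\in\dS,\ e_n\in X\}$ collects the $(k-1)$-subspaces and the remaining blocks give the $k$-subspaces of $\dS'$. Throughout I would use the standard correspondence $W \mapsto \pi^{-1}(W)$ between subspaces of $\F_q^{n-1}$ and subspaces of $\F_q^n$ containing $\Span{e_n}$, which satisfies $\pi(\pi^{-1}(W)) = W$ and $\pi^{-1}(\pi(U)) = U$ whenever $\Span{e_n}\subseteq U$.

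To handle the $(k-1)$-subspaces I would show $\tilde{\dS}$ is exactly the derived design of $\dS$ at the point $\Span{e_n}$. Given a $(t-1)$-subspace $S'$ of $\F_q^{n-1}$, the subspace $\pi^{-1}(S')$ is a $t$-subspace of $\F_q^n$ containing $e_n$, hence lies in a unique block $B$ of $\dS$; this $B$ contains $e_n$, so $\pi(B)\in\tilde{\dS}$ and $S'\subseteq\pi(B)$. Uniqueness of $B$ together with $\pi^{-1}(\pi(B)) = B$ shows that $S'$ lies in exactly one member of $\tilde{\dS}$ and that distinct blocks yield distinct $(k-1)$-subspaces; hence $\tilde{\dS}$ is a $q$-Steiner system $\dS_q(t-1,k-1,n-1)$, and the usual double count of $(t-1)$-subspaces gives $|\tilde{\dS}| = \sbinomq{n-1}{t-1}/\sbinomq{k-1}{t-1}$, as claimed.

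For the two covering statements, fix a $t$-subspace $T'$ of $\F_q^{n-1}$ and set $\hat{T} := \pi^{-1}(T')$, a $(t+1)$-subspace containing $e_n$. By Lemma~\ref{lem:addNone}, $T'$ has exactly $q^t$ extensions to $t$-subspaces $T_1,\dots,T_{q^t}$ of $\F_q^n$; these are precisely the $t$-subspaces of $\hat{T}$ complementary to $\Span{e_n}$, and each $T_i$ lies in a unique block $B_i$ of $\dS$. The crucial observation is that $e_n \in B_i$ if and only if $\hat{T} = T_i \oplus \Span{e_n} \subseteq B_i$. I would then establish the dichotomy that $T'$ is contained in a member of $\tilde{\dS}$ if and only if some block of $\dS$ contains $\hat{T}$ (using $T'\subseteq\pi(B) \iff \hat{T}\subseteq B$ for blocks $B\ni e_n$). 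If such a block $B^*$ exists, then $B^*$ contains every $T_i$, so all $B_i = B^*$ and $e_n\in B^*$; combined with the forward observation that any $k$-subspace of $\dS'$ containing $T'$ must arise as $\pi(B_i)$ for some $i$ with $e_n\notin B_i$, this shows $T'$ lies in no $k$-subspace of $\dS'$. Otherwise no block contains $\hat{T}$, so every $B_i$ omits $e_n$ and each $\pi(B_i)$ is a $k$-subspace of $\dS'$ containing $T'$.

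The step I expect to be the main obstacle is the multiplicity bookkeeping in the multiset $\dS'$: I must argue both that every $k$-subspace of $\dS'$ through $T'$ comes from exactly one extension $T_i$ (injectivity of $\pi|_{B_i}$ forces $\pi^{-1}(T')\cap B_i$ to be a single $T_i$, whence $B_i$ is determined) and that distinct extensions yield distinct blocks, since two equal blocks $B_i = B_j$ would contain $T_i + T_j = \hat{T}$, contradicting the assumption that no block covers $\hat{T}$. This bijectivity is exactly what upgrades the count from a mere divisor to precisely $q^t$. Once the projection picture and the equivalence ``$T'$ lies in a $(k-1)$-block $\iff$ $\hat{T}$ lies in a block'' are in place, the remaining verifications are routine.
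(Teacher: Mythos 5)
Your proof is correct, and its skeleton matches the paper's: both split the blocks according to whether they contain $e_n$, both obtain the $(k-1)$-part as the derived design at $\Span{e_n}$ via the unique extension of a $(t-1)$-subspace (Lemma~\ref{lem:addOne}), and both control multiplicities through the $q^t$ extensions of a $t$-subspace (Lemma~\ref{lem:addNone}). Where you genuinely diverge is in proving that a $t$-subspace $T'$ covered by a member of $\tilde{\dS}$ appears in no $k$-subspace of $\dS'$. The paper argues by counting inside a single block $Z\ni e_n$: it counts the $\sbinomq{k}{t}-\sbinomq{k-1}{t-1}$ $t$-subspaces of $Z$ avoiding $\Span{e_n}$, asserts that their punctured images are equidistributed over the $\sbinomq{k-1}{t}$ $t$-subspaces of $Z'$ (an equidistribution it does not justify in detail), and invokes the identity of Lemma~\ref{eq:gauss1} to get multiplicity $q^t$, which by Lemma~\ref{lem:addNone} exhausts all extensions of $T'$. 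You instead introduce the preimage $(t+1)$-subspace $\hat{T}=\pi^{-1}(T')$, note that the $q^t$ extensions of $T'$ are exactly the complements of $\Span{e_n}$ in $\hat{T}$, and observe that $T'\subseteq Z'$ forces $\hat{T}\subseteq Z$, so every extension of $T'$ already lies in $Z$ and in no other block. This is purely structural, bypasses both the equidistribution claim and the Gaussian identity, and also gives the distinctness of the $q^t$ covering blocks in the complementary case (via $T_i+T_j=\hat{T}$) more explicitly than the paper's closing ``simple counting'' remark. The trade-off is that the paper's computation doubles as a template for the enumerations of Section~\ref{sec:system}, whereas your argument is self-contained and closes the small gaps the paper leaves.
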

\begin{proof}
By Lemma~\ref{lem:1-punctured}, $\dS'$ can contain only $(k-1)$-subspaces and $k$-subspaces.
A~$k$-subspace which contains the unity vector with a \emph{one} in the last coordinate
is punctured into a $(k-1)$-subspace, while other $k$-subspaces are punctured into $k$-subspaces.

Let $X$ be a $(t-1)$-subspace of $\F_q^{n-1}$. By Lemma~\ref{lem:addOne}, there is a unique way
to extend it to a $t$-subspace $Y$ of $\F_q^n$. Therefore, since $\dS$ is a $q$-Steiner system $\dS_q(t,k,n)$, it follows
that $Y$ is contained in exactly one $k$-subspace $Z$ of $\dS$. Clearly, $Z$ is a $k$-subspace
in $\F_q^n$ which contains the unity vector with a \emph{one} in the last coordinate.
Hence, $Z$ is punctured into a $(k-1)$-subspace $Z'$ which is the only $(k-1)$-subspace
of $\dS'$ containing $X$. Therefore, all the $k$-subspaces of $\dS$ which contain
the unity vector with a \emph{one} in the last coordinate are punctured into
a $q$-Steiner system $\dS_q(t-1,k-1,n-1)$.

Let $Z$ be a $k$-subspace of $\F_q^n$ which contains $\Span{e_n}$. The
virtual $k$-subspace, obtained from the punctured $(k-1)$-subspace $Z'$,
contains $\sbinomq{k}{t}$ $t$-subspaces, some of them are identical
and some of them are virtual $t$-subspaces (see Example~\ref{ex:virtual}).
Since $\Span{e_n} \subset Z$ it follows that $Z$ has $\sbinomq{k-1}{t-1}$ $t$-subspaces
which contains $\Span{e_n}$ and hence $\sbinomq{k}{t}-\sbinomq{k-1}{t-1}$ $t$-subspaces
which do not contain $\Span{e_n}$. These are the only $t$-subspaces contained in $Z$.
Since $\Span{e_n} \subset Z$ it follows that $Z'$ is a $(k-1)$-subspace and hence it has
$\sbinomq{k-1}{t}$ distinct $t$-subspaces. The $\sbinomq{k}{t}-\sbinom{k-1}{t-1}$ $t$-subspaces
of $Z$ which do not contain $\Span{e_n}$ are punctured, and each $(t-1)$-subspace obtained in
this way is obtained the same amount of times in the set of punctured $(t-1)$-subspaces.
Therefore, each such $(t-1)$-subspace is contained $\frac{\sbinomq{k}{t}-\sbinomq{k-1}{t-1}}{\sbinomq{k-1}{t}}$
times in the punctured $t$-subspaces of $\F_q^{n-1}$ obtained from $\dS$.
By Lemma~\ref{eq:gauss1} we have $\frac{\sbinomq{k}{t}-\sbinomq{k-1}{t-1}}{\sbinomq{k-1}{t}}=q^t$
(by Lemma~\ref{lem:addNone} these are exactly all their appearances and hence no one could have appeared more times).

By Lemma~\ref{lem:addNone}, each $t$-subspace of $\F_q^{n-1}$ can be extended in $q^t$
distinct ways to a $t$-subspace of $\F_q^n$.
This implies, that each $t$-subspace which is not contained in $\tilde{\dS}$,
appears exactly $q^t$ times in the other $k$-subspaces of $\dS'$. Simple counting shows
that we have covered all the subspaces of the punctured system,
which completes the proof of the theorem.
\end{proof}
We note, that the $q$-Steiner system $\dS_q(t-1,k-1,n-1)$ of Theorem~\ref{thm:Sd1}
is the same as the one constructed in~\cite{ScEt}, but the extra factor of the Theorem are
the $k$-subspaces which do not belong to the $q$-Steiner system.
\begin{cor}
\label{cor:Sd1}
If $\dS$ is a $q$-Steiner system $\dS_q(k-1,k,n)$, then the punctured system $\dS'$
has a set $\tilde{\dS}$ with $\frac{\sbinomq{n-1}{k-2}}{\sbinomq{k-1}{k-2}}$ different $(k-1)$-subspaces
which form a $q$-Steiner system $\dS_q(k-2,k-1,n-1)$. Each other $(k-1)$-subspace
which is not contained in $\tilde{\dS}$, appears exactly $q^{k-1}$ times in the $k$-subspaces of $\dS'$.
\end{cor}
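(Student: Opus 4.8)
The plan is to obtain this corollary as a direct specialization of Theorem~\ref{thm:Sd1} to the extreme value $t = k-1$. Setting $t = k-1$ in the first assertion of that theorem, the punctured system $\dS'$ contains exactly $\frac{\sbinomq{n-1}{t-1}}{\sbinomq{k-1}{t-1}} = \frac{\sbinomq{n-1}{k-2}}{\sbinomq{k-1}{k-2}}$ distinct $(k-1)$-subspaces, and these form a $q$-Steiner system $\dS_q(t-1,k-1,n-1) = \dS_q(k-2,k-1,n-1)$. This is precisely the set $\tilde{\dS}$ together with its claimed cardinality, so the first assertion of the corollary will follow with no further work.

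For the second assertion I would first record a small translation of language. When $t = k-1$, the $t$-subspaces of $\F_q^{n-1}$ that Theorem~\ref{thm:Sd1} distinguishes are themselves $(k-1)$-subspaces, and since a member of $\tilde{\dS}$ is also $(k-1)$-dimensional, the phrase ``contained in a $(k-1)$-subspace of $\tilde{\dS}$'' degenerates to equality, i.e.\ to membership in $\tilde{\dS}$. Hence the dichotomy of the theorem between $t$-subspaces that are, and are not, contained in a $(k-1)$-subspace of $\tilde{\dS}$ becomes exactly the dichotomy between $(k-1)$-subspaces that do, and do not, belong to $\tilde{\dS}$.

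With this identification, the second assertion of Theorem~\ref{thm:Sd1} says that every $(k-1)$-subspace of $\F_q^{n-1}$ not belonging to $\tilde{\dS}$ appears exactly $q^t = q^{k-1}$ times among the $k$-subspaces of $\dS'$, which is the desired count; the complementary half of that dichotomy (a member of $\tilde{\dS}$ lies inside none of the $k$-subspaces of $\dS'$) is consistent with, though not required by, the statement. Because the whole argument is a substitution into an already-proved result, there is no genuine obstacle here; the only point demanding any care is the observation in the second paragraph that at $t = k-1$ containment of a $t$-subspace in a $(k-1)$-subspace collapses to equality, so that the two families of $(k-1)$-subspaces occurring in $\dS'$—namely the Steiner system $\tilde{\dS}$ and the remaining $(k-1)$-subspaces that appear inside the $k$-subspaces—are exactly the two cases separated by Theorem~\ref{thm:Sd1}.
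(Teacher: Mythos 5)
Your proposal is correct and is exactly what the paper intends: the corollary is stated without a separate proof precisely because it is the specialization $t=k-1$ of Theorem~\ref{thm:Sd1}, and your observation that containment of a $(k-1)$-subspace in a $(k-1)$-subspace collapses to equality (so the theorem's dichotomy becomes membership in $\tilde{\dS}$ versus not) is the only point needing care, and you handle it properly.
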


The first goal of this paper is to define a new type of design $\dS_q(t,k,n;m)$ which
contains the possible subspaces punctured from a $q$-Steiner system $\dS_q(t,k,n)$.

\begin{defn}
A $p$-\emph{punctured} $q$-\emph{Steiner system} $\dS_q(t,k,n;m)$, $m=n-p$, is a multi-set $\dS$ of subspaces of $\F_q^m$,
in which each $t$-subspace of $\F_q^n$ can be obtained exactly once by extending
$p$ times all the subspaces of $\dS$, where the appearances of the same
subspace of $\F_q^m$ in $\dS$ are extended together (in parallel) for this purpose.
The appearances of distinct subspaces of $\F_q^m$ in $\dS$ are extended in a sequence,
where the order of the different subspaces of $\dS$ in this sequence is arbitrary.
\end{defn}
An equivalent definition, and maybe easier to understand, can be given in terms of virtual subspaces.
\begin{defn}
A $p$-\emph{punctured} $q$-\emph{Steiner system} $\dS_q(t,k,n;m)$, $m=n-p$,
is a multi-set $\dS$ of subspaces of $\F_q^m$, satisfying the following two requirements.

\begin{enumerate}
\item The number of subspaces in $\dS$ is the same as the number of subspaces in
a $q$-Steiner system $\dS_q(t,k,n)$.

\item Let $\tilde{\dS}$ be a system which contains the virtual $k$-subspaces
of the subspaces in $\dS$ ($\dS$ and $\tilde{\dS}$ have the same size).
Let $\T$ be the set of all $t$-subspaces of $\F_q^n$ and $\T'$ be the set
of all $p$-punctured $t$-subspaces of $\F_q^m$. For each subspace $X \in \T'$ let
$$\lambda (X) = |  \{ Y ~:~ Y \in \T , ~ X ~ \text{is a}~ p\text{-punctured}~ t\text{-subspace of} ~ Y  \} |~.$$
It is required that for each $X \in \T'$, $X$ will be appear $\lambda (X)$ times as a virtual
$t$-subspace in the virtual $k$-subspaces of $\tilde{\dS}$.
\end{enumerate}
\end{defn}

\begin{example}
Let $\dS$ be a systems which consists of 336 1-subspaces and 45 0-subspaces of $\F_2^1$.
There are $\sbinomtwo{6}{2} =651$ 2-subspaces of $\F_2^7$ whose first column is the all-zero column.
Each extension of an 1-subspace of $\F_2^1$ will contribute one 2-subspaces of $\F_2^7$
whose first column is all-zero, while
each extension of a 0-subspace of $\F_2^1$ will contribute seven 2-subspaces of $\F_2^7$
whose first column is all-zero. Hence, the extension in parallel produces $336+45 \cdot 7 =651$
such subspaces as required. The same goes for the other 2-subspaces and hence $\dS$ is
a 6-punctured $q$-Steiner system $\dS_2(2,3,7;1)$.
\end{example}

As an immediate trivial result is the following lemma.
\begin{lemma}
\label{lem:punct_punct}
If there exists a $p$-punctured $q$-Steiner system $\dS_q(t,k,n;m)$, then
there exists a $(p+1)$-punctured $q$-Steiner system $\dS_q(t,k,n;m-1)$.
\end{lemma}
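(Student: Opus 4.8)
The plan is to show that a $p$-punctured $q$-Steiner system is preserved under one additional puncturing step, by composing the defining extension property of $\dS_q(t,k,n;m)$ with a single further puncturing/extension. First I would recall the definition: a system $\dS$ of subspaces of $\F_q^m$ is a $p$-punctured $q$-Steiner system $\dS_q(t,k,n;m)$ precisely when extending every subspace of $\dS$ simultaneously $p$ times produces each $t$-subspace of $\F_q^n$ exactly once (with multiple copies of the same subspace extended together). The goal is to produce from $\dS$ a system $\dS''$ of subspaces of $\F_q^{m-1}$ that is a $(p+1)$-punctured $q$-Steiner system $\dS_q(t,k,n;m-1)$, i.e.\ such that extending $\dS''$ exactly $p+1$ times recovers each $t$-subspace of $\F_q^n$ once.

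The natural candidate is $\dS'' = \dS'$, the once-punctured set of $\dS$ obtained by deleting the last coordinate from every subspace in $\dS$ (as a multi-set, so that $|\dS''|=|\dS|$). The key observation to verify is that extending $\dS''$ a total of $p+1$ times is the same operation as first extending $\dS''$ once and then extending the result $p$ times. I would argue that extending $\dS'' = \dS'$ once reproduces $\dS$ in the sense relevant to the defining property: by Lemma~\ref{lem:addNone} and Lemma~\ref{lem:addOne}, every extension of a subspace of $\F_q^{m-1}$ to $\F_q^m$ that can arise from puncturing is accounted for, and since $\dS'$ records the actual outcomes of puncturing $\dS$ in the last coordinate, the full set of $p+1$-fold extensions of $\dS'$ coincides with the $p$-fold extensions of $\dS$ (the first of the $p+1$ extension steps restoring the coordinate that was punctured to form $\dS'$). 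Thus each $t$-subspace of $\F_q^n$ arises exactly once under the $(p+1)$-fold extension of $\dS''$, which is exactly the requirement in the definition.

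The step I expect to be the main obstacle is making precise the bookkeeping that ``extending $p+1$ times'' factors cleanly as ``extend once, then extend $p$ times'' while respecting the rule that identical subspaces must be extended together and distinct subspaces are extended in an arbitrary but fixed sequence. Because puncturing can collapse two distinct $k$-subspaces of $\F_q^m$ into the same subspace of $\F_q^{m-1}$, passing from $\dS$ to $\dS'$ may merge multiplicities, and one must check that the simultaneous-extension convention in the definition handles these merges correctly so that no $t$-subspace is over- or under-counted. The cleanest way to handle this is to invoke the associativity of iterated extension directly against the definition: the defining property of $\dS_q(t,k,n;m)$ is a statement about the $p$-fold extension of $\dS$, and since $\dS$ is recoverable as (one layer of) the extension of $\dS'$, the $(p+1)$-fold extension of $\dS'$ inherits the exact-once property verbatim. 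Once this factorization is stated carefully, the remainder is immediate and no further computation is needed.
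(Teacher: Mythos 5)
Your proposal is correct and matches the paper's intent: the paper states Lemma~\ref{lem:punct_punct} without proof as ``an immediate trivial result,'' and the justification it has in mind is exactly yours --- puncture every subspace of $\dS$ in one further coordinate to form $\dS'$, and observe that the $(p+1)$-fold extension of $\dS'$ whose first step restores the deleted column reproduces the $p$-fold extension of $\dS$, so the exact-once covering of the $t$-subspaces of $\F_q^n$ is inherited verbatim. Your remark about multiplicities merging under puncturing is the right point to flag, and it is absorbed by the multiset convention in the definition, so no further argument is needed.
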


The following theorem is given for a punctured $q$-Steiner system $\dS_q(t,k,n;n-1)$.
It can be generalized to other $p$-punctured $q$-Steiner systems, $p>1$. For simplicity
and since the case $p=1$ is the most informative
we prove only this case.

In several proofs we will need to use concatenation of two matrices. Let $X_1$ and $X_2$
be two $\ell \times m_1$ and $\ell \times m_2$ matrices, respectively. The
\emph{concatenation} of $X_1$ and $X_2$, $X_1 \circ X_2$ ($X_1$ or $X_2$ can be also columns)
is an $\ell \times (m_1 + m_2)$ matrix whose first $m_1$ columns is $X_1$ and its
last $m_2$ columns is $X_2$.

\begin{theorem}
\label{thm:diff_punc}
If $\dS'$, a punctured $q$-Steiner system $\dS_q(t,k,n;n-1)$, $1 < t < k <n$, was
obtained by puncturing a $q$-Steiner system $\dS_q(t,k,n)$, then all subspaces
of $\dS'$ are distinct.
\end{theorem}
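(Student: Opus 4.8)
We need to show that when we puncture a $q$-Steiner system $\dS_q(t,k,n)$ once (deleting the last coordinate), every subspace in the resulting multiset $\dS'$ is actually distinct — no two blocks of $\dS$ collapse to the same subspace of $\F_q^{n-1}$. Recall $\dS'$ contains two kinds of subspaces: the $(k-1)$-subspaces (coming from blocks $Z$ with $\Span{e_n} \subset Z$) and the $k$-subspaces (coming from blocks avoiding $e_n$). The danger is that two distinct blocks $X_1, X_2 \in \dS$ puncture to the same subspace $Y$ of $\F_q^{n-1}$.

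**Plan of attack.** The natural approach is to split by the two cases of Lemma~\ref{lem:1-punctured} and derive a contradiction with the defining property of the $q$-Steiner system. Suppose $X_1 \neq X_2$ are blocks of $\dS$ with $X_1' = X_2'$ in $\F_q^{n-1}$. I would write both in reduced row echelon form. Since puncturing deletes the last column, $X_1$ and $X_2$ agree on their first $n-1$ columns and differ only in the last column; equivalently, $X_1$ and $X_2$ are two distinct extensions of the common subspace $Y = X_1' = X_2'$ back up to $\F_q^n$. \textbf{Case 1:} both $X_1, X_2$ are $k$-subspaces punctured to a $k$-subspace $Y$. By Lemma~\ref{lem:addNone}, a $k$-subspace $Y$ of $\F_q^{n-1}$ has exactly $q^k$ distinct extensions to $k$-subspaces of $\F_q^n$, so $X_1,X_2$ can indeed be two of these. \textbf{Case 2:} both are $(k-1)$-subspaces of $\dS$, i.e.\ blocks containing $\Span{e_n}$, punctured to the same $(k-1)$-subspace — but this case cannot arise since by Theorem~\ref{thm:Sd1} the $(k-1)$-subspaces of $\dS'$ already form a genuine $q$-Steiner system $\dS_q(t-1,k-1,n-1)$ with distinct blocks, settling that part immediately. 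So the entire burden is Case~1.

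**Reaching the contradiction in Case 1.** Here is where I would use the hypothesis $t > 1$. Both $X_1$ and $X_2$ are $k$-subspaces of $\F_q^n$ extending the same $k$-subspace $Y \subset \F_q^{n-1}$; since they are distinct extensions, their last columns differ. The idea is to produce a $t$-subspace $T$ of $\F_q^n$ that is contained in \emph{both} $X_1$ and $X_2$, which would contradict the $q$-Steiner property (each $t$-subspace lies in a unique block). I would look at $W = X_1 \cap X_2$. Since $X_1, X_2$ both project onto $Y$ under deletion of the last coordinate and both avoid $e_n$, any vector common to both must have its first $n-1$ coordinates in $Y$ and, because the two last-column linear functionals $\varphi_1, \varphi_2 \colon Y \to \F_q$ (encoding how each fills in the $n$th coordinate) differ, the intersection $W$ corresponds to the kernel of $\varphi_1 - \varphi_2$, a hyperplane of $Y$; hence $\dim W = k-1$. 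Now I choose any $t$-subspace $T \subseteq W$ — possible precisely because $t \le k-1$, which holds since $t < k$. Then $T \subseteq X_1$ and $T \subseteq X_2$ with $X_1 \neq X_2$, contradicting that $\dS$ is a $q$-Steiner system $\dS_q(t,k,n)$ in which $T$ lies in a unique block. This forces $X_1 = X_2$.

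**The main obstacle.** The delicate point is the dimension count $\dim(X_1 \cap X_2) = k-1$: I must verify that two distinct $k$-subspaces of $\F_q^n$ projecting to the \emph{same} $k$-subspace $Y$ under coordinate deletion necessarily meet in dimension exactly $k-1$ (not less). This is where the condition $t>1$ is secretly needed — if the intersection could drop to dimension $t-1$ or lower we could not guarantee a shared $t$-subspace. The cleanest way is to argue via the functional description: deletion of the last coordinate restricted to $X_1$ is an isomorphism onto $Y$ (since $e_n \notin X_1$), giving a unique functional $\varphi_i$ on $Y$ for each $X_i$; $X_1 = X_2$ iff $\varphi_1 = \varphi_2$, and otherwise $X_1 \cap X_2$ is the graph of $\varphi_1$ restricted to $\ker(\varphi_1 - \varphi_2)$, of dimension $k-1$. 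Once this functional reformulation is in place, the shared $t$-subspace exists because $t \le k-1$, and the contradiction is immediate. I would therefore organize the proof around this functional picture, handling the $(k-1)$-subspace case by citation to Theorem~\ref{thm:Sd1} and concentrating the real work on the $k$-subspace intersection argument.
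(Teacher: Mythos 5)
Your proposal is correct and follows essentially the same route as the paper: the $(k-1)$-subspaces are handled by Theorem~\ref{thm:Sd1}, and for two distinct $k$-subspaces with the same puncturing you show their intersection has dimension $k-1\geq t$, so they share a $t$-subspace, contradicting the Steiner property. Your linear-functional formulation of the last coordinate is just a cleaner phrasing of the paper's argument with the concatenated columns $x\circ y$ and the rows where the two entries agree; the only quibble is that the hypothesis doing the work in that step is $t\leq k-1$, not $t>1$.
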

\begin{proof}
Let $\dS$ be a $q$-Steiner system $\dS_q(t,k,n)$.
By Theorem~\ref{thm:Sd1}, all the $(k-1)$-subspaces of $\dS'$,
the punctured $q$-Steiner systems $\dS_q(t,k,n;n-1)$,
are distinct since they form a $q$-Steiner system $\dS_q(t-1,k-1,n-1)$.
Hence, we only have to prove that there are no two
equal $k$-subspaces in $\dS'$.

Assume that $X$ and $Y$ are two distinct $k$-subspaces which are punctured to
$k$-subspaces $X'$ and $Y'$. Assume that $X'=Y'$ and consider their representation
by $(q^k-1) \times (n-1)$ matrices. Let $x$ and $y$ be the last two columns of
$X$ and $Y$, respectively.
Let $x\circ y$ denote the $(q^k-1) \times 2$ matrix formed by concatenating
$x$ and $y$ in this order.
Clearly, $x \neq y$ since otherwise $X=Y$. Neither $x$
nor $y$ can be an all-zero column since otherwise both $X$ and $Y$ will contain
the same $(k-1)$-subspace (relates to the zeroes in the nonzero column).
Hence, $x \circ y$ is
the $k$-expansion of a 2-subspace in $\F_q^2$. This 2-subspace contains all the vectors
of $\F_q^2$ including the ones in which the two elements are nonzero and equal. These
row vectors in $x \circ y$ with the rows of \emph{zeroes} (clearly contained in the $k$-expansion
of a 2-subspace of $\F_q^2$ since $k>2$) form a $(k-1)$-expansion of
a 1-subspace. Therefore, since $k-1 \geq t$ we have that $X$ and $Y$ contain one common
$t$-subspace whose last ($n$th) column is the corresponding $t$-expansion of this 1-subspace,
a contradiction to the fact that $\dS$ is a $q$-Steiner systems $\dS_q(t,k,n)$.
Thus, $X' \neq Y'$ and the proof of the theorem is completed.
\end{proof}

\section{System of Equations}
\label{sec:system}

Corollary~\ref{cor:ness} yields a set of necessary conditions for the existence of a $q$-Steiner system $\dS_q(t,k,n)$.
Similar necessary conditions can be derived for any $p$-punctured
$q$-Steiner system. Some of these conditions yield new interesting equations which must be satisfied.
In this section we will derive these new necessary conditions.

Let $\tilde{\dS}$ be an $(n-m)$-punctured $q$-Steiner system $\dS_q(t,k,n;m)$ and let $p=n-m$, i.e. $\dS_q(t,k,n;m)$
is a $p$-punctured $q$-Steiner system. We start with two simple lemmas which are implied by our
previous discussion on punctured subspaces. The first lemma is an immediate
consequence of Lemma~\ref{lem:1-punctured}.

\begin{lemma}[\bf dimension of subspaces to be covered]
\label{lem:s_dim}
Suppose $\dS$ is a $q$-Steiner system $\dS_q(t,k,n)$. Let $\T$ be the set of $t$-subspaces
which are covered by the blocks of $\dS$. Then in the $p$-punctured $q$-Steiner
system $\dS_q(t,k,n;m)$, $p=n-m$, derived from $\dS$, each element of $\T$ corresponds to
a $p$-punctured $s$-subspace, where $\max \{0,t-p\} \leq s \leq \min \{t,m\}$.
\end{lemma}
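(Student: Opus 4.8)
The plan is to reduce this lemma directly to Lemma~\ref{lem:1-punctured}, which already pins down the possible dimension of a single $p$-punctured $k$-subspace. The statement to be proved is really the analogous bound one dimension lower: a $t$-subspace covered by a block of $\dS$, when we puncture the whole system $p$ times, becomes a $p$-punctured $s$-subspace with $\max\{0,t-p\}\leq s\leq\min\{t,m\}$. So first I would fix an arbitrary $t$-subspace $T\in\T$ and track what happens to $T$ under the puncturing of its ambient block.

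The first step is to observe that puncturing is defined coordinate-by-coordinate on the vectors of a subspace, and that this operation is insensitive to which larger subspace we view $T$ inside of: puncturing $T$ (as a $t$-subspace of $\F_q^n$) in the last $p$ coordinates gives the same set of shortened vectors whether we regard $T$ on its own or as a subset of the block $X\in\dS$ containing it. Hence the $p$-punctured image of $T$ is exactly the $p$-punctured $t$-subspace obtained by applying the puncturing operation directly to $T$. This lets me invoke Lemma~\ref{lem:1-punctured} with the roles $k\mapsto t$ and $n$ unchanged.

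Applying Lemma~\ref{lem:1-punctured} to the $t$-subspace $T$ of $\F_q^n$ punctured $p$ times, the resulting subspace is an $s$-subspace with
\[
\max\{0,\,t-p\}\leq s\leq\min\{t,\,n-p\}.
\]
Since $m=n-p$, the upper bound $\min\{t,n-p\}$ is precisely $\min\{t,m\}$, which is the bound claimed in the statement. This is the entire content, and it is immediate once the reduction in the previous paragraph is made; there is really no computational obstacle here.

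The only point requiring a word of care — and the closest thing to an obstacle — is the claim that the image of $T$ in the $p$-punctured system $\dS_q(t,k,n;m)$ genuinely ``corresponds to'' the $p$-punctured subspace of $T$, rather than to something recorded differently in the design. Here I would appeal to the way the punctured system was set up: each block $X\in\dS$ is punctured coordinatewise, $t$-subspaces of $X$ are punctured along with it, and so the contained $t$-subspace $T$ is carried to its own $p$-punctured image. Since $T$ was chosen as an arbitrary element of $\T$ and the dimension bound holds for each one, the conclusion holds for every element of $\T$, completing the proof.
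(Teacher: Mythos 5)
Your proposal is correct and matches the paper's own treatment: the paper states this lemma as an immediate consequence of Lemma~\ref{lem:1-punctured}, applied to the $t$-subspace with $k$ replaced by $t$, exactly as you do. Your extra remark that puncturing a contained $t$-subspace is independent of the ambient block is a reasonable (if implicit in the paper) clarification and does not change the argument.
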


\begin{lemma}[\bf dimension of subspaces which cover the $p$-punctured $s$-subspaces]
\label{lem:r_dim}
Suppose $\dS$ is a $q$-Steiner system $\dS_q(t,k,n)$.
Let $\T$ be the $t$-subspaces which are covered by the blocks of $\dS$.
Then in the $p$-punctured $q$-Steiner
system $\dS_q(t,k,n;m)$, $p=n-m$, derived from $\dS$, each element of $\T$ corresponds to
a $p$-punctured $s$-subspace, covered by a $p$-punctured $r$-subspace,
where $\max \{ k-p,s \} \leq r \leq \min \{k-t+s,m\}$.
\end{lemma}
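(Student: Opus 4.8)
The plan is to describe the $p$-fold puncturing as a linear projection and to read off all four inequalities from the dimensions of intersections with its kernel. Write $\pi : \F_q^n \to \F_q^m$ for the map that deletes the $p = n-m$ punctured coordinates (the last $p$, say), and let $W = \Span{e_{m+1},\dots,e_n}$ be its kernel, a $p$-subspace of $\F_q^n$. For any subspace $X$ of $\F_q^n$ the punctured subspace is the actual span $\pi(X)$, and applying rank--nullity to the restriction $\pi|_X$ gives the bookkeeping identity $\dim \pi(X) = \dim X - \dim(X \cap W)$, which is just the quantitative form of Lemma~\ref{lem:1-punctured}. I would apply this to the unique block $B \in \dS$ (a $k$-subspace) containing a fixed $t$-subspace $T \in \T$: then $\pi(B)$ is the covering $r$-subspace and $\pi(T)$ the covered $s$-subspace, so that $r = k - \dim(B \cap W)$ and $s = t - \dim(T \cap W)$.

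First I would dispose of the two bounds that do not involve $s$. Since $B \cap W \subseteq W$ we have $\dim(B \cap W) \le \dim W = p$, whence $r = k - \dim(B\cap W) \ge k-p$; and since $\pi(B)$ is a subspace of $\F_q^m$ we have $r \le m$. These are precisely the block bounds coming from Lemma~\ref{lem:1-punctured}. The two remaining inequalities are where the containment $T \subseteq B$ enters. For the lower bound $r \ge s$, note that $T \subseteq B$ forces $\pi(T) \subseteq \pi(B)$, and a subspace cannot have dimension exceeding that of a subspace containing it, so $s = \dim \pi(T) \le \dim \pi(B) = r$. For the upper bound $r \le k-t+s$, note that $T \subseteq B$ also forces $T \cap W \subseteq B \cap W$, hence $\dim(T \cap W) \le \dim(B \cap W)$; substituting $\dim(T \cap W) = t-s$ and $\dim(B\cap W) = k-r$ yields $t - s \le k - r$, i.e.\ $r \le k-t+s$. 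Combining the four inequalities gives $\max\{k-p,\,s\} \le r \le \min\{k-t+s,\,m\}$, as claimed.

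I expect no serious obstacle here; the one point requiring care is to confirm that the identity $\dim \pi(X) = \dim X - \dim(X \cap W)$ is the correct device inside the paper's representation of punctured blocks by $(q^k-1)\times m$ matrices, where a punctured subspace is recorded together with its possibly coincident and \emph{virtual} rows. Since the genuine punctured subspace is the actual span $\pi(X)$ and the lemma concerns only its true dimension $r$ (resp.\ $s$), the multiplicity of repeated rows is immaterial and the projection formula applies verbatim. As a consistency check one can specialise the argument to $T$ alone: the general intersection bounds $\max\{0,\,t-m\} \le \dim(T \cap W) \le \min\{t,\,p\}$ give $\max\{0,\,t-p\} \le s \le \min\{t,\,m\}$, recovering exactly the range of Lemma~\ref{lem:s_dim} and confirming that the refined bounds proved here sit correctly between the unconditional ones.
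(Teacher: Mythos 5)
Your proof is correct and follows essentially the same route as the paper's: the bounds $r\ge k-p$ and $r\le m$ come from Lemma~\ref{lem:1-punctured} and membership in $\F_q^m$, $r\ge s$ from the fact that a subspace cannot be covered by one of smaller dimension, and $r\le k-t+s$ from the observation that the covering block's dimension must drop by at least as much as the covered $t$-subspace's. Your rank--nullity formulation $\dim\pi(X)=\dim X-\dim(X\cap W)$ together with $T\cap W\subseteq B\cap W$ simply makes explicit the paper's informal statement that the $k$-subspace ``must also be reduced by at least $t-s$ in its dimension.''
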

\begin{proof}
The lower bound is a consequence of Lemma~\ref{lem:1-punctured} and the fact
that an $s$-subspace cannot be covered by a subspace of a smaller dimension.
Since the subspaces of $\dS_q(t,k,n;m)$ are subspaces of $\F_q^m$, it follows that
$r \leq m$. Finally, if a $t$-subspace $X$ was punctured to an $s$-subspace then
the $k$-subspace $Y$ which covers $X$ must also be reduced by at least $t-s$ times
in its dimension and hence $r \leq k-t+s$.
\end{proof}

We are now in a position to describe a set of equations, related to the
$p$-punctured $q$-Steiner system $\dS_q(t,k,n;m)$, which must be satisfied
if the $p$-punctured $q$-Steiner system $\dS_q(t,k,n;m)$ exists.
Each $s$-subspace $X$ of $\F_q^m$, $\max \{0,t-p\} \leq s \leq \min \{t,m\}$, yields
one equation related to the way it is covered by $\dS_q(t,k,n;m)$.
Each $r$-subspace $Y$ of $\F_q^m$, $\max \{ k-p,0 \} \leq r \leq \min \{k,m\}$,
yields one nonnegative integer variable, $a_Y$, which is the number of appearances of $Y$ in
the $p$-punctured $q$-Steiner system $\dS_q(t,k,n;m)$. In the equation
for the $s$-subspace $X$ we have a linear combination of the variables for the $r$-subspaces
of $\F_q^m$ which contain $X$.

\begin{example}
Assume that we want to examine the 5-punctured $q$-Steiner system $\dS_2(2,3,7;2)$.
Clearly, by Lemma~\ref{lem:s_dim}, the 2-subspaces of $\F_2^7$ were punctured to $s$-subspaces of $\F_2^2$,
where $0 \leq s \leq 2$. There is exactly one 0-subspace, three 1-subspaces, and one
2-subspaces, of $\F_2^2$, represented as virtual 2-subspaces by the following five
$3 \times 2$ matrices.
$$
\begin{array}{c}
00 \\
00 \\
00
\end{array}, ~~
\begin{array}{c}
10 \\
10 \\
00
\end{array},~~
\begin{array}{c}
01 \\
01 \\
00
\end{array},~~
\begin{array}{c}
11 \\
11 \\
00
\end{array},~~
\begin{array}{c}
01 \\
10 \\
11
\end{array}~.
$$
Clearly, by Lemma~\ref{lem:r_dim} these s-subspaces of $\F_2^7$ are covered by $r$-subspaces of $\F_2^2$,
where $0 \leq r \leq 2$. There is exactly one 0-subspace, three 1-subspaces, and one
2-subspaces, of $\F_2^2$, represented as virtual 3-subspaces by the following five
$7 \times 2$ matrices.
$$
X=\begin{array}{c}
00 \\
00 \\
00 \\
00 \\
00 \\
00 \\
00
\end{array}, ~~
Y=\begin{array}{c}
10 \\
10 \\
10 \\
10 \\
00 \\
00 \\
00
\end{array},~~
Z=\begin{array}{c}
01 \\
01 \\
01 \\
01 \\
00 \\
00 \\
00
\end{array},~~
U=\begin{array}{c}
11 \\
11 \\
11 \\
11 \\
00 \\
00 \\
00
\end{array},~~
V=\begin{array}{c}
01 \\
01 \\
10 \\
10 \\
11 \\
11 \\
00
\end{array}~.
$$
Therefore, we have 5 variables $a_X$, $a_Y$, $a_Z$, $a_U$, and $a_V$. The system of equations
consists of the following five equations:
$$
\begin{array}{c}
155 = 7 \cdot a_X +a_Y + a_Z + a_U \\
496 = 6 \cdot a_Y + a_V \\
496 = 6 \cdot a_Z + a_V \\
496 = 6 \cdot a_U + a_V \\
1024 = 4 \cdot a_V
\end{array}~.
$$
The first equation is constructed as follows: there are $\sbinomtwo{5}{2}=155$
2-subspaces whose first two columns are \emph{zeroes}. For all the seven 2-subspaces
resulting from the virtual 3-subspace $X$ the first two columns are \emph{zeroes}. Only for one such 2-subspaces
resulting from $Y$, $Z$, or $U$ the first two columns are \emph{zeroes}.
This explains the first equation. The other four equations are constructed in a similar way.
There is a unique solution for this set of equations, $a_X=5$, $a_Y=a_Z=a_U=40$, and $a_V=256$,
and hence the 5-punctured $q$-Steiner system $\dS_2(2,3,7;2)$ exists.
\end{example}

The variables
which appear in each equation and their coefficients in the equation
should be computed in advance as will be done next.
First we have to compute the number of $t$-subspaces in $\F_q^n$
which are formed by extending an $s$-subspace $X$ of $\F_q^m$.
Let $N_{(s,m),(t,n)}$ be the number of distinct $t$-subspaces in
$\F_q^n$ which are formed by extending a given $s$-subspace $X$ of $\F_q^m$.

\begin{example}
Let $X$ be the following 2-subspace of $\F_2^5$ represented by the $3 \times 5$ matrix,
$$
X=\begin{array}{c}
01001 \\
00101 \\
01100
\end{array} ~.
$$
It can be extended to the following $N_{(2,5),(3,7)}=12$ 3-subspaces of $\F_2^7$ represented by the $7 \times 7$ matrices,
$$
\begin{array}{c}
0100100 \\
0010100 \\
0110000 \\
0100101 \\
0010101 \\
0110001 \\
0000001
\end{array},~~
\begin{array}{c}
0100110 \\
0010110 \\
0110000 \\
0100111 \\
0010111 \\
0110001 \\
0000001
\end{array},~~
\begin{array}{c}
0100110 \\
0010100 \\
0110010 \\
0100111 \\
0010101 \\
0110011 \\
0000001
\end{array},~~
\begin{array}{c}
0100100 \\
0010110 \\
0110010 \\
0100101 \\
0010111 \\
0110011 \\
0000001
\end{array},~~
\begin{array}{c}
0100100 \\
0010100 \\
0110000 \\
0100110 \\
0010110 \\
0110010 \\
0000010
\end{array},~~
\begin{array}{c}
0100101 \\
0010100 \\
0110001 \\
0100111 \\
0010110 \\
0110011 \\
0000010
\end{array},~~
$$
$$
\begin{array}{c}
0100100 \\
0010101 \\
0110001 \\
0100110 \\
0010111 \\
0110011 \\
0000010
\end{array},~~
\begin{array}{c}
0100101 \\
0010101 \\
0110000 \\
0100111 \\
0010111 \\
0110010 \\
0000010
\end{array},~~
\begin{array}{c}
0100100 \\
0010100 \\
0110000 \\
0100111 \\
0010111 \\
0110011 \\
0000011
\end{array},~~
\begin{array}{c}
0100101 \\
0010100 \\
0110001 \\
0100110 \\
0010111 \\
0110010 \\
0000011
\end{array},~~
\begin{array}{c}
0100100 \\
0010101 \\
0110001 \\
0100111 \\
0010110 \\
0110010 \\
0000011
\end{array},~~
\begin{array}{c}
0100101 \\
0010101 \\
0110000 \\
0100110 \\
0010110 \\
0110011 \\
0000011
\end{array}~.
$$
\end{example}

\begin{lemma}
If $0<m<n$ and $0 \leq s \leq t$, then $N_{(s,m),(t,n)}=q^{s(n-m-t+s)} \sbinomq{n-m}{t-s}$.
\end{lemma}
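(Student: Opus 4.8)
The plan is to reinterpret the extension operation as taking fibers of a coordinate projection, and then to count those fibers by a two-step decomposition. Let $\pi:\F_q^n\to\F_q^m$ be the linear map that deletes the last $n-m$ coordinates, so that puncturing a subspace $n-m$ times is exactly applying $\pi$. By the definition of extension, a $t$-subspace $Y$ of $\F_q^n$ is an extension of the fixed $s$-subspace $X$ of $\F_q^m$ precisely when $\pi(Y)=X$. Hence $N_{(s,m),(t,n)}$ is the number of $t$-subspaces $Y$ of $\F_q^n$ with $\pi(Y)=X$, and the whole lemma reduces to counting these subspaces.

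First I would localize the problem inside the preimage $\tilde X:=\pi^{-1}(X)$, which has dimension $s+(n-m)$ and contains $W:=\ker\pi$, a subspace of dimension $n-m$ with $\tilde X/W\cong X$. Every $Y$ we must count lies in $\tilde X$, and the condition $\pi(Y)=X$ is equivalent to $Y+W=\tilde X$; a dimension count then forces $\dim(Y\cap W)=t-s$. This suggests stratifying the count by the subspace $U:=Y\cap W$. There are $\sbinomq{n-m}{t-s}$ choices for $U$ as a $(t-s)$-subspace of $W\cong\F_q^{n-m}$, and I would show that each such $U$ is actually attained.

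For a fixed $U$, I would pass to the quotient $\tilde X/U$: the map $\pi$ induces a surjection $\bar\pi:\tilde X/U\to X$ whose kernel is $W/U$, of dimension $(n-m)-(t-s)$, and the admissible $Y$ correspond bijectively to the subspaces $Y/U$ of $\tilde X/U$ that are complementary to $W/U$ (equivalently, that map isomorphically onto $X$ under $\bar\pi$). Since $W/U$ has codimension $s$ in $\tilde X/U$, the standard count of complements gives $q^{s((n-m)-(t-s))}=q^{s(n-m-t+s)}$ of them. Multiplying the two independent choices yields $N_{(s,m),(t,n)}=q^{s(n-m-t+s)}\sbinomq{n-m}{t-s}$, as claimed; the boundary case $t-s>n-m$ is consistent, both sides being $0$.

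The step I expect to be the main obstacle is verifying that the assignment $Y\mapsto(U,\,Y/U)$ is a genuine bijection onto the set of such pairs, i.e. that every choice of $U$ together with every complement of $W/U$ produces a distinct, valid $t$-subspace with the correct intersection and projection, so that there is neither overcounting nor omission. Once this bookkeeping and the elementary fact that a codimension-$s$ subspace of an $N$-dimensional space has $q^{s(N-s)}$ complements are in place, the computation is immediate. An alternative is to induct on $n-m$ using Lemma~\ref{lem:addNone} and Lemma~\ref{lem:addOne}, but that route forces one to sum a $q$-weighted product over all dimension profiles of the intermediate extensions, which is considerably messier than the projection argument above.
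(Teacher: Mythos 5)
Your proof is correct. At bottom it performs the same two-step decomposition as the paper, but in a different dress: the paper writes a generator matrix for $Y$ in reduced row echelon form with upper-left block $G_1$ (generating $X$), lower-right block $G_2$, and upper-right block $B$; the choice of $G_2$ as a $(t-s)$-subspace of $\F_q^{n-m}$, contributing the factor $\sbinomq{n-m}{t-s}$, is exactly your choice of $U=Y\cap\ker\pi$, and the $q^{s(n-m-t+s)}$ free entries of $B$ sitting in the non-pivot columns of $G_2$ parametrize exactly your complements of $W/U$ in $\tilde X/U$. What your route buys is that the bijectivity worry you flag is discharged cleanly and abstractly: subspaces of $\tilde X/U$ complementary to $W/U$ pull back bijectively to the $t$-subspaces $Y$ with $Y\cap W=U$ and $Y+W=\tilde X$, and the count $q^{d(N-d)}$ of complements of a $d$-dimensional subspace in an $N$-dimensional space finishes it; the paper instead leans on the uniqueness of the reduced row echelon form to rule out over- and undercounting. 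Both arguments are complete; yours is coordinate-free and arguably more transparent, while the paper's stays in the matrix representation that it uses everywhere else (and that representation is what makes the companion quantities $C_{(s,t),(r,k)}$ and $D_{s,r,m}$ easy to compute in the same style).
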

\begin{proof}
The $s$-subspace $X$ of $\F_q^m$ is represented by an $s \times m$ matrix $G_1$ in reduced row echelon form.
A $t$-subspace $Y$ formed by extending $X$ to a $t$-subspace
of $\F_q^n$ is represented by a $t\times n$ generator
matrix $G$ in reduced row echelon form. The upper left $s \times m$ matrix of $G$ is the
generator matrix $G_1$ of $X$ and hence $G$ has the following structure.
\begin{large}
\begin{align*}
\left[ \begin{array}{cc}
G_1 & B\\
{\bf 0} & G_2
\end{array}
\right] .
\end{align*}
\end{large}
The new $n-m$ columns (in $G$ relatively to $G_1$), restricted
to the last $t-s$ rows, forms a generator matrix for a $(t-s)$-subspace of $\F_q^{n-m}$. This
$(t-s) \times (n-m)$ generator matrix $G_2$ is in reduced row echelon form, where the
columns with leading \emph{ones} are the ones in which the dimension is
increased during the extension (see Lemma~\ref{lem:addOne}). This generator matrix can be chosen in
$\sbinomq{n-m}{t-s}$ distinct ways since it forms a $(t-s)$-subspace of $\F_q^{n-m}$.
In the other new $n-m-t+s$ columns of $G$ (columns with no leading \emph{ones})
the dimension is not increased compared to the original $s$-subspace $X$ and
hence they can be chosen (after the columns with the leading \emph{ones} of $G_2$
are fixed; see Lemma~\ref{lem:addNone}) in $q^{s(n-m-t+s)}$ distinct ways. The reason is that $B$ has
$s$ rows and $n-m$ columns from which the entries of $n-m-t+s$ columns can be chosen arbitrarily from $\F_q$.
It leads to a total of $q^{s(n-m-t+s)} \sbinomq{n-m}{t-s}$ distinct ways to form this extension.
Note, that the columns of $G_2$ do not (and need not) contribute to these extensions.
Each $(t-s)$-subspace of $\F_q^{n-m}$ is combined
with the related $s$-subspace in $\F_q^m$ to form a $t$-subspace of $\F_q^n$.
\end{proof}

For a given $s$-subspace $X$ of $\F_q^m$, $N_{(s,m),(t,n)}$ should be equal to the number
of $r$-subspaces in the $p$-punctured $q$-Steiner system $\dS_q(t,k,n;m)$,
$\max \{ k-p,s \} \leq r \leq \min \{k-t+s,m\}$, which contains $X$.
Note, that if $r<k$, then there are $r$-subspaces in $\dS_q(t,k,n;m)$ which contain $X$
more than once, since we should look on the $k$-expansion of the $r$-subspace.
Clearly, for this purpose we also have to consider
the $t$-expansion of the related $s$-subspace~$X$.
Let $C_{(s,t),(r,k)}$ be the number of copies of the $t$-expansion $\tilde{X}$ obtained from the $s$-subspace $X$
in $\F_q^m$ (note that $X$ is $t$-expanded in a unique way),
which are contained in the $k$-expansion $\tilde{Y}$ of an $r$-subspace $Y$ in $\F_q^m$,
such that $X$ is a subspace of $Y$.

\begin{example}
Let $X$ be the following 2-subspace of $\F_2^4$ represented by the $3 \times 4$ matrix,
$$
X=\begin{array}{c}
0100 \\
0010 \\
0110
\end{array} ~.
$$
If $Y=X$, then the 3-expansion $\tilde{Y}$ of $Y$ is represented by $7 \times 4$ matrix,
and one of its extensions $\hat{Y}$ for a 3-subspace is represented by a $7 \times 7$ matrix as follows:
$$
\tilde{Y}=\begin{array}{c}
0100 \\
0010 \\
0110 \\
0100 \\
0010 \\
0110 \\
0000
\end{array} ,~~~
\hat{Y}=\begin{array}{c}
0100000 \\
0010000 \\
0110000 \\
0100101 \\
0010101 \\
0110101 \\
0000101
\end{array} ~.
$$
$\hat{Y}$ contains $C_{(2,2),(2,3)}=4$ 2-subspaces extended from $X$ represented by the $3 \times 7$ matrices,
$$
\begin{array}{c}
0100000 \\
0010000 \\
0110000
\end{array} ,~~
\begin{array}{c}
0100000 \\
0010101 \\
0110101
\end{array} ,~~
\begin{array}{c}
0100101 \\
0010000 \\
0110101
\end{array} ,~~
\begin{array}{c}
0100101 \\
0010101 \\
0110000
\end{array} ~.
$$
\end{example}

\begin{lemma}
If $0 \leq s \leq t <k$ and $s \leq r \leq k-t+s$, then $C_{(s,t),(r,k)}=\sbinomq{k-r}{t-s} q^{s(k-r-t+s)}$.
\end{lemma}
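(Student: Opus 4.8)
The plan is to realize the $k$-expansion concretely and then reduce the count to a clean subspace-counting problem over $\F_q$. First I would fix an actual $k$-subspace $Z$ of $\F_q^{m+(k-r)}$ whose restriction to the first $m$ coordinates produces the $k$-expansion $\tilde Y$: iterating Lemma~\ref{lem:addOne} $k-r$ times on the $r$-subspace $Y$ yields such a $Z$, with generator matrix $\left[\begin{smallmatrix} G_Y & 0 \\ 0 & I_{k-r}\end{smallmatrix}\right]$ in reduced row echelon form, where $G_Y$ is the $r\times m$ generator of $Y$. Let $\pi\colon Z\to\F_q^m$ be the projection onto the first $m$ coordinates; then $\pi$ has image $Y$ and kernel $K=\Span{e_{m+1},\dots,e_{m+k-r}}$ of dimension $k-r$. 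A $t$-subspace of the virtual $k$-subspace $\tilde Y$ is exactly a $t$-subspace $W$ of $Z$, and such a $W$ is a copy of $\tilde X$ precisely when $\pi(W)=X$: indeed, any $t$-subspace $W$ with $\pi(W)=X$ meets $K$ in dimension $t-s$, so $\pi|_W$ surjects onto $X$ with every vector of $X$ having exactly $q^{t-s}$ preimages, which is the multiplicity pattern of the $t$-expansion $\tilde X$. Hence $C_{(s,t),(r,k)}$ equals the number of $t$-subspaces $W\subseteq Z$ with $\pi(W)=X$.

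Next I would shrink the domain. Since $\pi(W)=X$ forces $W\subseteq\pi^{-1}(X)=:Z_0$, and $\pi|_{Z_0}$ surjects onto $X$ with kernel $K$, the space $Z_0$ has dimension $s+(k-r)$ and contains $K$. For a $t$-subspace $W\subseteq Z_0$ the condition $\pi(W)=X$ is equivalent to $W+K=Z_0$, and by the identity $\dim(W+K)=t+(k-r)-\dim(W\cap K)$ this is in turn equivalent to $\dim(W\cap K)=t-s$. Both requirements make sense exactly when $0\le t-s\le k-r$, that is, under the hypotheses $s\le t$ and $r\le k-t+s$. So the task becomes counting the $t$-subspaces $W$ of the $(s+k-r)$-dimensional space $Z_0$ with $W+K=Z_0$.

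Finally I would carry out this count by fibering over the intersection $J:=W\cap K$. For each $(t-s)$-subspace $J$ of $K$---there are $\sbinomq{k-r}{t-s}$ of them---passing to the quotient $Z_0/J$ turns $W/J$ into a complement of $K/J$ inside $Z_0/J$; here $\dim(K/J)=(k-r)-(t-s)$ and the complement $W/J$ has dimension $s$, so the number of complements is $q^{s\,\dim(K/J)}=q^{s(k-r-t+s)}$, independent of $J$. The correspondence $W\mapsto(J,\,W/J)$ is a bijection onto these pairs, so multiplying the two independent counts gives $C_{(s,t),(r,k)}=\sbinomq{k-r}{t-s}\,q^{s(k-r-t+s)}$, as claimed; one checks $s=t=2,\,r=2,\,k=3$ returns $\sbinomq{1}{0}q^{2}=4$, matching the worked example. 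The step requiring the most care is the translation in the first paragraph---verifying that ``a copy of the virtual $t$-subspace $\tilde X$'' is faithfully captured by the condition $\pi(W)=X$ together with the automatic matching of multiplicities---since everything afterward is routine subspace counting. It is worth noting that the resulting expression is formally $N_{(s,r),(t,k)}$, reflecting that expansion in the multiplicity direction behaves exactly like extension from dimension $r$ to $k$.
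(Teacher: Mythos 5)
Your proof is correct and follows essentially the same two-factor decomposition as the paper's: both count a copy of $\tilde X$ inside $\tilde Y$ by first choosing its $(t-s)$-dimensional intersection with the appended kernel part (giving $\sbinomq{k-r}{t-s}$) and then counting the lifts of $X$ (giving $q^{s(k-r-t+s)}$). The only real difference is that your second factor is obtained cleanly as the number of complements of $K/J$ in $Z_0/J$, which is more rigorous than the paper's argument of choosing each basis vector of $X$ in $q^{k-r}$ ways and dividing out a $q^{(t-s)s}$ overcount.
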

\begin{proof}
An $r$-subspace $Y$ of $\F_q^m$ can be represented by a $(q^r-1) \times m$ matrix
whose rows are the nonzero vectors of $Y$. This $r$-subspace $Y$ is $k$-expanded in $\F_q^m$
by writing in a $(q^k-1)\times m$ matrix vertically $q^{k-r}$ copies of $Y$ and after them
$(q^{k-r} -1) \times m$ all-zero matrix $Z$. This forms a $(q^k-1) \times m$ matrix
which represents the $k$-expansion of $Y$.
If $Y$ was $p$-punctured from a $k$-subspace $W$ of $\F_q^n$, then $W$ is formed from
the $k$-expansion of $Y$ by concatenating to $Z$ a $(q^{k-r} -1) \times (n-m)$
matrix $\tilde{Z}$ which represent a $(k-r)$-subspace of $\F_q^{n-m}$.
The $k$-subspace of $\F_q^n$ is a direct sum of this
$(k-r)$-subspace in $\F_q^n$ with an extension of $Y$ to an $r$-subspace in~$\F_q^n$,

Similarly, the $s$-subspace $X$, which
is a subspace of $Y$, is extended and expanded to a $(k-r+s)$-subspace by writing in a
$(q^k-1)\times m$ matrix vertically $q^{k-r}$
copies of $X$ and after them $(q^{k-r} -1) \times m$ all-zero matrix~$Z$. Each $t$-subspace in $\F_q^n$ which
is extended from $X$, in the $q$-Steiner system $\dS_q(t,k,n)$, is constructed by first choosing
a $(t-s)$-subspace from $\tilde{Z}$, which can be done in $\sbinomq{k-r}{t-s}$
different ways. The $(t-s)$-subspace is completed to a $t$-subspace by performing
direct sum with the extension of $X$. The $s$-subspace $X$ can be chosen in a few distinct ways
from the $k$-expansion of $Y$. Each vector from a given
basis of $X$ can be chosen in $q^{k-r}$ distinct ways (since $q^{k-r}$ copies of $Y$
were written). But, since each vector of $X$ appears
$q^{t-s}$ times in the $t$-expansion of $X$, it follows that each choice of $X$ is chosen
in $(\frac{q^{k-r}}{q^{t-s}})^s$ distinct ways. This implies that
$C_{(s,t),(r,k)}=\sbinomq{k-r}{t-s} (\frac{q^{k-r}}{q^{t-s}})^s=\sbinomq{k-r}{t-s} q^{s(k-r-t+s)}$
\end{proof}

Now, for a given $s$-subspace $X$ of $\F_q^m$, $N_{(s,m),(t,n)}$ should be equal
to the sum over all $r$-subspaces which contain $X$, where $C_{(s,t),(r,k)}$, for a given $r$-subspace $Y$, is multiplied
by $a_Y$ (see the definitions after Lemma~\ref{lem:r_dim}),
the number of appearance of $Y$ in the $p$-punctured $q$-Steiner system $\dS_q(t,k,n;m)$.
For a given $r$, $\max \{ k-p,s \} \leq r \leq \min \{k-t+s,m\}$, let
$D_{s,r,m}$ be the number of $r$-subspaces which contain a given
$s$-subspace in $\F_q^m$. In other words, $D_{s,r,m}$ is the number of variables for $r$-subspaces
which appear in the equation for any given $s$-subspace.

\begin{lemma}
If $0 \leq s \leq r \leq m$, then $D_{s,r,m}= \sbinomq{m-s}{r-s}$.
\end{lemma}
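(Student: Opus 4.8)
The plan is to reduce this count to a count of subspaces in a quotient space, where the defining formula for the Gaussian coefficient recalled in the introduction applies verbatim. Fix an $s$-subspace $X$ of $V \deff \F_q^m$ and consider the quotient space $V/X$, which has dimension $m-s$. Let $\pi : V \to V/X$ be the canonical projection. The whole argument rests on relating $r$-subspaces of $V$ that contain $X$ to subspaces of $V/X$.

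First I would invoke the correspondence (lattice-isomorphism) theorem for vector spaces: the map $Y \mapsto \pi(Y)=Y/X$ is an inclusion-preserving bijection from the set of subspaces of $V$ that contain $X$ onto the set of all subspaces of $V/X$, with inverse $W \mapsto \pi^{-1}(W)$. The point that must be checked is that this bijection respects dimension in the shifted sense $\dim(Y/X) = \dim Y - \dim X$. This is immediate by completing a basis of $X$ to a basis of $Y$ and observing that the $\pi$-images of the added basis vectors form a basis of $Y/X$. Consequently an $r$-subspace $Y \supseteq X$ corresponds to exactly one $(r-s)$-subspace of $V/X$, and every $(r-s)$-subspace of $V/X$ arises in this way from exactly one such $Y$.

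Therefore $D_{s,r,m}$, the number of $r$-subspaces of $V$ containing $X$, equals the number of $(r-s)$-subspaces of the $(m-s)$-dimensional space $V/X$, which by the definition of the Gaussian coefficient is $\sbinomq{m-s}{r-s}$, the claimed value. The hypotheses $0 \leq s \leq r \leq m$ ensure $0 \leq r-s \leq m-s$, so the coefficient is well defined. There is no genuine obstacle here; the only care needed is the dimension bookkeeping in the correspondence. If one prefers to avoid quotient spaces, an equivalent direct double count works just as well: count ordered bases of $V$ obtained by extending a fixed ordered basis of $X$ to an ordered basis of an $r$-subspace $Y$, then divide by the number of such extensions realizing a fixed $Y$; the two factors combine to the same product expression for $\sbinomq{m-s}{r-s}$.
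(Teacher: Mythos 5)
Your proof is correct, but it takes a genuinely different route from the paper. The paper argues by a direct double count: it builds an $r$-subspace containing the fixed $s$-subspace $X$ by appending linearly independent vectors one at a time, obtaining $\prod_{i=s}^{r-1}(q^m-q^i)$ ordered choices, then divides by the $\prod_{i=s}^{r-1}(q^r-q^i)$ ordered choices that produce the same $r$-subspace, and simplifies the ratio termwise to $\sbinomq{m-s}{r-s}$. This is exactly the ``equivalent direct double count'' you sketch in your last sentence. Your main argument instead passes to the quotient $\F_q^m/X$ and invokes the correspondence theorem, reducing the claim to the definition of the Gaussian coefficient as the number of $(r-s)$-subspaces of an $(m-s)$-dimensional space. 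Your route is more conceptual and avoids the algebraic simplification of the product entirely (the only thing to verify is the dimension shift $\dim(Y/X)=\dim Y-\dim X$, which you do); the paper's route is more self-contained in that it never leaves the ambient space $\F_q^m$ and produces the product formula explicitly, at the cost of a small computation. Both are complete proofs.
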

\begin{proof}
Let $X$ be an $s$-subspace of $\F_q^m$. We enumerate the number of distinct
$r$-subspaces which contain $X$ in $\F_q^m$, by adding linearly independent vectors one by one to $X$.
The first vector can be chosen in $q^m-q^s$ distinct ways, the second in $q^m-q^{s+1}$ distinct ways and the
last in $q^m - q^{r-1}$ different ways, for a total of $\Pi_{i=r-1}^s (q^m-q^i)$ different ways.
Similarly, a given $r$-subspace $Y$ which is formed in this way can be constructed in
$\Pi_{i=r-1}^s (q^r-q^i)$ distinct ways (the first vector can be chosen in $q^r-q^s$ distinct ways and so on).
Hence, the total number of distinct $r$-subspaces formed in this way is
$$\frac{\prod_{i=r-1}^s (q^m-q^i)}{\prod_{i=r-1}^s (q^r-q^i)}=\prod_{i=r-1}^s \frac{q^m-q^i}{q^r-q^i}
=\prod_{i=r-1}^s \frac{q^{m-i}-1}{q^{r-i}-1}=\sbinomq{m-s}{m-r}=\sbinomq{m-s}{r-s}$$
\end{proof}

So far, we have described the computation of the components in the
equations that should be satisfied if a $p$-punctured $q$-Steiner system $\dS_q(t,k,n;m)$ exists.
The solution for the variables must be nonnegative integers. Before we describe the specific
equations, and before we reduce the number of equations
in some cases, we compute the total number of equations and the total number
of variables in the equations for the $p$-punctured $q$-Steiner system $\dS_q(t,k,n;m)$.

\begin{lemma}
\label{lem:num_eq}
The number of equations for the $p$-punctured $q$-Steiner system $\dS_q(t,k,n;m)$, $m=n-p$, is
$$
\sum_{s= \max \{0,t-p\}}^{\min\{t,m\}} \sbinomq{m}{s} ~,
$$
i.e. $\sbinomq{m}{s}$ equations for all the $s$-subspaces of $\F_q^m$,
where $\max \{0,t-p\} \leq s \leq \min \{t,m\}$.
\end{lemma}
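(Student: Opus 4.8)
The plan is to observe that this lemma is purely an enumeration of the equations set up in the paragraph following Lemma~\ref{lem:r_dim}, so the proof reduces to (i) identifying the index set of the equations and (ii) counting it with the Gaussian coefficient. By the construction given there, the system of equations for $\dS_q(t,k,n;m)$ assigns exactly one equation to each $s$-subspace $X$ of $\F_q^m$, namely the equation recording the way $X$ is covered. Hence the total number of equations is exactly the number of such subspaces $X$, and the whole task is to determine which dimensions $s$ occur and how many $s$-subspaces $\F_q^m$ contributes in each admissible dimension.

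First I would invoke Lemma~\ref{lem:s_dim} to fix the range of $s$: every $t$-subspace of $\F_q^n$ covered by the blocks of $\dS$ is $p$-punctured to an $s$-subspace of $\F_q^m$ with $\max\{0,t-p\} \leq s \leq \min\{t,m\}$, and conversely each such dimension does arise. Therefore the equations are indexed precisely by the $s$-subspaces of $\F_q^m$ as $s$ ranges over the integers in the interval $[\max\{0,t-p\},\,\min\{t,m\}]$, with no value of $s$ outside this interval contributing an equation.

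Finally I would carry out the count. For each fixed admissible $s$, the number of $s$-subspaces of $\F_q^m$ is $\abs{\cG_q(m,s)} = \sbinomq{m}{s}$, as recalled in the introduction. Summing over the admissible values of $s$ yields
\[
\sum_{s=\max\{0,t-p\}}^{\min\{t,m\}} \sbinomq{m}{s},
\]
which is the asserted number of equations.

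This lemma is essentially bookkeeping, so I expect no genuine obstacle; the only point requiring care is confirming that the correspondence between equations and $s$-subspaces is exactly one-to-one, that is, that distinct $s$-subspaces really produce distinct equations and that no admissible $s$-subspace is left out. Both facts are immediate from the way the system was defined after Lemma~\ref{lem:r_dim} together with the dimension bounds of Lemma~\ref{lem:s_dim}, so the enumeration above is complete.
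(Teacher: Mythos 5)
Your proof is correct and follows exactly the same route as the paper's: invoke Lemma~\ref{lem:s_dim} to fix the admissible range of $s$, note that each $s$-subspace of $\F_q^m$ contributes exactly one equation, and count the $s$-subspaces by $\sbinomq{m}{s}$. The paper's version is just a terser statement of the same bookkeeping.
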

\begin{proof}
The range of $s$ is a direct consequence from Lemma~\ref{lem:s_dim}. For each
$s$-subspaces of $\F_q^m$ we have one equation and hence there are $\sbinomq{m}{s}$
equations for each $s$.
\end{proof}

\begin{lemma}
\label{lem:num_var}
The number of variables for the $p$-punctured $q$-Steiner system $\dS_q(t,k,n;m)$, $m=n-p$, is
$$
\sum_{r= \max \{0,k-p\}}^{\min\{k,m\}} \sbinomq{m}{r} ~,
$$
i.e. $\sbinomq{m}{r}$ variables for all the $r$-subspaces of $\F_q^m$,
where $\max \{0,k-p\} \leq r \leq \min \{k,m\}$.
\end{lemma}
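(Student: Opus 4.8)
The plan is to mirror the proof of Lemma~\ref{lem:num_eq}, with the covering blocks playing the role that the covered $t$-subspaces played there. First I would pin down the admissible range of $r$. By the definition of the variables, each $a_Y$ is indexed by an $r$-subspace $Y$ of $\F_q^m$ that can occur in the $p$-punctured $q$-Steiner system $\dS_q(t,k,n;m)$, and every such $Y$ arises as a $p$-puncturing of one of the $k$-subspace blocks of the underlying $\dS_q(t,k,n)$. Since each block is a $k$-subspace of $\F_q^n$ and is punctured $p = n-m$ times, Lemma~\ref{lem:1-punctured} (read with $k$ in place of the generic dimension) states directly that the resulting dimension satisfies $\max\{0, k-p\} \leq r \leq \min\{k, n-p\} = \min\{k, m\}$. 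This is exactly the index set appearing in the claimed sum, and it plays the same role here as the range of $s$ supplied by Lemma~\ref{lem:s_dim} does in Lemma~\ref{lem:num_eq}.

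Next I would count the variables dimension by dimension. For each fixed $r$ in the range above, the convention introduced after Lemma~\ref{lem:r_dim} assigns exactly one variable $a_Y$ to every $r$-subspace $Y$ of $\F_q^m$ (including those for which $a_Y$ will turn out to be zero). Hence the number of variables at dimension $r$ is simply the cardinality of the Grassmannian $\cG_q(m,r)$, which equals $\sbinomq{m}{r}$ by the formula recalled in the introduction. Summing over the admissible values of $r$ then gives $\sum_{r=\max\{0,k-p\}}^{\min\{k,m\}} \sbinomq{m}{r}$, as asserted.

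There is no genuine obstacle: the result is an immediate consequence of Lemma~\ref{lem:1-punctured} together with the definition of the variables, in complete parallel with how Lemma~\ref{lem:num_eq} follows from Lemma~\ref{lem:s_dim}. The one point deserving a moment's attention is that the relevant range for the full set of variables is the wide interval $\max\{0,k-p\} \leq r \leq \min\{k,m\}$ of all possible punctured block dimensions, and \emph{not} the narrower per-equation interval $\max\{k-p, s\} \leq r \leq \min\{k-t+s, m\}$ furnished by Lemma~\ref{lem:r_dim}, which only lists the covering subspaces entering a single fixed equation. Once this distinction is observed, the enumeration is routine.
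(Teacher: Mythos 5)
Your proof is correct, and the counting step --- one variable per $r$-subspace of $\F_q^m$, hence $\sbinomq{m}{r}$ variables for each admissible $r$ --- is identical to the paper's. Where you genuinely differ is in how the range $\max\{0,k-p\} \leq r \leq \min\{k,m\}$ is justified. The paper obtains it from Lemma~\ref{lem:r_dim}, i.e.\ by taking the union of the per-equation intervals $\max\{k-p,s\} \leq r \leq \min\{k-t+s,m\}$ as $s$ runs over the range of Lemma~\ref{lem:s_dim}: the upper endpoint reaches $\min\{k,m\}$ at $s=\min\{t,m\}$ (using $t<k$, so that $m<t$ forces $m<k$), and the lower endpoint reaches $\max\{0,k-p\}$ at $s=\max\{0,t-p\}$. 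You instead apply Lemma~\ref{lem:1-punctured} directly to the $k$-subspace blocks of the underlying $\dS_q(t,k,n)$: a $p$-punctured $k$-subspace of $\F_q^n$ has dimension between $\max\{0,k-p\}$ and $\min\{k,n-p\}=\min\{k,m\}$. Both are valid. Yours is the more direct and cleaner route, avoiding the interval-union bookkeeping that makes the paper's one-line justification somewhat cryptic; the paper's route has the mild side benefit of showing that every variable dimension actually appears in at least one equation. Your closing observation --- that the relevant range is the union over all $s$ and not the narrower per-equation interval of Lemma~\ref{lem:r_dim} --- is precisely the point the paper's terse proof is gesturing at, so the two arguments meet in the middle.
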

\begin{proof}
The range of $r$ is a direct consequence from Lemma~\ref{lem:r_dim} by noting that either $s$ gets the value of $t$ for a given $t$
and if $m < t$ then also $m < k$ and the value of $r$ is at most $m$. For each
$r$-subspaces of $\F_q^m$ we have one variable and hence there are $\sbinomq{m}{r}$
variables for each~$r$.
\end{proof}

\begin{cor}
If $m \leq t$, then the number of variables is equal to the number of equations,
for the $p$-punctured $q$-Steiner system $\dS_q(t,k,n;m)$. This number is equal to
$$
\sum_{e=0}^m \sbinomq{m}{e} ~.
$$
If the equations are linearly independent, then there is a unique solution to
the set of equations in this case (when the variables
ore not constrained). If the solution consists of nonnegative integers then the
$p$-punctured $q$-Steiner system $\dS_q(t,t+1,n;m)$ exists.
\end{cor}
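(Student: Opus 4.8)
The plan is to treat the three assertions in turn, leaning on Lemmas~\ref{lem:num_eq} and~\ref{lem:num_var} for the counts and on elementary linear algebra for the uniqueness. First I would record that, by Lemma~\ref{lem:num_eq}, the equations are indexed by the $s$-subspaces of $\F_q^m$ with $\max\{0,t-p\}\le s\le\min\{t,m\}$, and, by Lemma~\ref{lem:num_var}, the variables by the $r$-subspaces of $\F_q^m$ with $\max\{0,k-p\}\le r\le\min\{k,m\}$. Under $m\le t<k$ both upper limits equal $m$, since $\min\{t,m\}=\min\{k,m\}=m$; and since the puncturing is deep ($p=n-m\ge k$, equivalently $m\le n-k$) both lower limits vanish. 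Hence each index runs over $0,1,\dots,m$, and the number of equations and the number of variables are both $\sum_{e=0}^m\sbinomq{m}{e}$, the total number of subspaces of $\F_q^m$. This settles the first assertion.

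The second assertion is then immediate: the counts just computed make the linear system square, so if its equations are linearly independent the coefficient matrix is invertible and the real solution $(a_Y)$ is unique. Nonnegativity and integrality play no role at this stage, which is exactly what it means to solve the system with the variables unconstrained.

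For the last assertion I would argue by realization. Form the multiset $\dS$ that contains $a_Y$ copies of each subspace $Y$ of $\F_q^m$. By the very way the equations were built from $N_{(s,m),(t,n)}$ and $C_{(s,t),(r,k)}$, each equation states that, for a fixed $s$-subspace $X$, the number of copies of the $t$-expansion of $X$ lying inside the $k$-expansions of the members of $\dS$ equals the number $N_{(s,m),(t,n)}$ of $t$-subspaces of $\F_q^n$ that puncture to $X$. Thus a nonnegative integer solution guarantees that the aggregate coverage of every puncturing class is exactly right, and it remains to choose, for each copy in $\dS$, an extension to a $k$-subspace of $\F_q^n$ (Lemmas~\ref{lem:addNone} and~\ref{lem:addOne} describe the available extensions) so that every \emph{individual} $t$-subspace of $\F_q^n$ is covered exactly once.

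The hard part will be precisely this realization step, and it is where the restriction $k=t+1$ enters. For a general $k$ the equations only balance coverage per puncturing class, so within one class a $t$-subspace could be covered twice while another is missed; for $k=t+1$ the expansions are rigid enough that each block meets a class in a single, controllable way, so matching aggregate counts force a feasible assignment of extensions. I would make this precise either by a uniform construction that distributes the copies symmetrically over the Grassmannian or by verifying a Hall-type condition between the copies in $\dS$ and the $t$-subspaces to be covered, thereby exhibiting $\dS$ as a genuine $p$-punctured $q$-Steiner system $\dS_q(t,t+1,n;m)$.
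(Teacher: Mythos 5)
Your handling of the first two assertions is correct and is exactly what the paper intends: substituting $m\le t<k$ into Lemmas~\ref{lem:num_eq} and~\ref{lem:num_var} makes both upper summation limits equal to $m$, and (granting $p=n-m\ge k$, which you assert rather than derive from $m\le t$, and which is implicitly needed for the stated count to be correct at all) both lower limits equal to $0$, so the system is square with $\sum_{e=0}^m\sbinomq{m}{e}$ equations and variables; linear independence then gives uniqueness of the unconstrained solution. The paper offers no argument beyond this for the corollary, so up to here you have simply filled in the omitted routine details.

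The divergence is in the last assertion, and there your proof is not finished. You read ``the design exists'' as requiring a realization step --- an explicit assignment, to each copy of each block, of an extension to $\F_q^n$ so that every \emph{individual} $t$-subspace of $\F_q^n$ is obtained exactly once --- and you only sketch two possible strategies (a symmetric distribution over the Grassmannian, or a Hall-type matching) without carrying either out. As written, that is a gap. But the gap is largely an artifact of reading the definition more strongly than the paper does: in the Example immediately following the Definition of a $p$-punctured system, existence is certified purely by checking, class by class (one class per $s$-subspace $X$ of $\F_q^m$), that the aggregate number of $t$-subspaces of $\F_q^n$ contributed by the parallel extensions equals the number required ($336+45\cdot 7=651$ there). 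Those per-class counting identities are precisely the equations of Section~\ref{sec:system}, so under the paper's own reading a nonnegative integer solution $(a_Y)$ \emph{is} the design: take $a_Y$ copies of each $r$-subspace $Y$ and there is nothing left to realize. If you insist on the stronger reading, you should state explicitly that the claim is then unproved (the restriction to $k=t+1$ does not by itself rescue it, and the paper supplies no such argument either); if you adopt the paper's reading, your final paragraph should be replaced by that one-line observation. Either way, the ``hard part'' cannot be left deferred as though it were about to be resolved.
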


\begin{cor}
If $m = t+2$, then the number of variables
for the $p$-punctured $q$-Steiner system $\dS_q(t,t+1,n;m)$, $m=n-p$, is equal to
$$
\sum_{r=0}^{t+1} \sbinomq{t+2}{r} ~.
$$
The number of equations in this case is equal to
$$
\sum_{s=0}^t \sbinomq{t+2}{s} ~.
$$
If we set the value of the variable which corresponds to the null 0-subspace of $\F_q^m$ to be
$\frac{\sbinomq{n-m}{t}}{\sbinomq{k}{t}}$ and the equations are linearly independent,
then there is a unique solution to the set of equations in this case (when the variables
ore not constrained). If the solution consists of nonnegative integers then the
$p$-punctured $q$-Steiner system $\dS_q(t,t+1,n;m)$ exists.
\end{cor}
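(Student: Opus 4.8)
The plan is to separate the two enumerations from the solvability assertion and to spend most of the effort on the latter. First I would obtain the counts by specializing Lemma~\ref{lem:num_var} and Lemma~\ref{lem:num_eq} to $k=t+1$ and $m=t+2$. For the variables, $\min\{k,m\}=\min\{t+1,t+2\}=t+1$, and since in the regime of interest $p=n-m\ge k$ we have $\max\{0,k-p\}=0$, so the range of $r$ collapses to $0\le r\le t+1$, giving $\sum_{r=0}^{t+1}\sbinomq{t+2}{r}$. For the equations, $\min\{t,m\}=t$ and $\max\{0,t-p\}=0$, so $s$ runs over $0\le s\le t$, giving $\sum_{s=0}^{t}\sbinomq{t+2}{s}$. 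Both are immediate once the boundary minima and maxima are checked, so this part is routine.

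Next I would record the structural fact that drives everything else. By Lemma~\ref{lem:r_dim}, for $k=t+1$ the $r$-subspaces occurring in the equation of an $s$-subspace satisfy $s\le r\le \min\{k-t+s,m\}=s+1$, so every equation couples only two consecutive dimensions: the equation attached to an $s$-subspace $X$ expresses $N_{(s,m),(t,n)}$ as $C_{(s,t),(s,k)}\,a_X$ plus a combination of the variables $a_Y$ over the $(s+1)$-subspaces $Y\supset X$. This presents the whole system as a staircase ascending from dimension $0$ to dimension $t+1$, with a nonzero diagonal coefficient $C_{(s,t),(s,k)}$ at each level; the null $0$-subspace sits at the foot of every chain. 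The value $\frac{\sbinomq{n-m}{t}}{\sbinomq{k}{t}}$ prescribed for the null variable is the block-count of a $q$-Steiner system $\dS_q(t,k,n-m)$ on the $n-m$ punctured coordinates, which is the natural quantity to attach to the subspaces that collapse all the way to the zero subspace under $p$-fold puncturing, hence the right boundary condition to impose.

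Because every datum entering the equations — the extension count $N_{(s,m),(t,n)}$, the multiplicity $C_{(s,t),(r,k)}$, and the incidence number $D_{s,r,m}=\sbinomq{m-s}{r-s}$ — depends only on the dimensions $s,r$ and not on the particular subspaces, it is natural, and (as I argue below) necessary for the uniqueness assertion to hold, to seek a solution in which $a_Y=\alpha_r$ depends only on $r=\dim Y$. Under this ansatz the $\sbinomq{t+2}{s}$ equations of level $s$ all coincide, collapsing the system to $t+1$ equations in the $t+2$ unknowns $\alpha_0,\dots,\alpha_{t+1}$; by the staircase property each such equation involves only $\alpha_s$ and $\alpha_{s+1}$, so the coefficient matrix is bidiagonal with nonzero diagonal. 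Fixing $\alpha_0=\frac{\sbinomq{n-m}{t}}{\sbinomq{k}{t}}$ then leaves a triangular, invertible system solved uniquely by forward substitution ($\alpha_0$ gives $\alpha_1$ through level $0$, $\alpha_1$ gives $\alpha_2$ through level $1$, and so on up to level $t$), and in this form the hypothesis that the equations are linearly independent is automatic.

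The main obstacle is reconciling this with the raw counts, and it is genuinely delicate: the full, non-symmetric system is \emph{not} square, since the variable total exceeds the equation total by $\sbinomq{t+2}{t+1}=\sbinomq{t+2}{1}$, reflecting that the top dimension $r=t+1$ carries variables but no equation of its own. Consequently fixing the single null variable cannot by itself cut the full solution set down to a point; uniqueness genuinely lives at the level of the uniform reduction above, and the hardest point to state cleanly is the precise sense in which the dimension-constant solution is \emph{the} solution once $\alpha_0$ is pinned. Finally, for the concluding implication that a nonnegative integer solution yields an actual $\dS_q(t,t+1,n;m)$, I would realize the design by taking $\alpha_r$ copies of each $r$-subspace of $\F_q^m$ and verify, again using the dimension-only dependence of the $N$- and $C$-counts, that every $t$-subspace of $\F_q^n$ is recovered exactly once under $p$-fold extension; showing that the matching totals force \emph{exact} rather than merely average coverage is the subtle final step.
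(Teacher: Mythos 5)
Your enumeration of the variables and equations is fine, and your observation that for $k=t+1$ each equation couples only the dimensions $s$ and $s+1$ is correct. But you have missed the one idea on which the paper's proof actually rests, and as a result your argument does not establish the uniqueness claim in the form stated. You correctly compute that the full system has $\sbinomq{t+2}{t+1}=\sbinomq{t+2}{1}$ more variables than equations, and from this you conclude that ``fixing the single null variable cannot by itself cut the full solution set down to a point,'' retreating to a dimension-uniform ansatz whose relation to the full system you admit you cannot pin down. That conclusion is wrong. The variables $a_Y$ are by definition nonnegative integers (they count appearances of $Y$ in the design). The $s=0$ equation reads $\sbinomq{n-m}{t}=\sbinomq{k}{t}\,a_{0}+\sum_{\dim Y=1}C_{(0,t),(1,k)}\,a_Y$ with $C_{(0,t),(1,k)}=\sbinomq{t}{t}q^{0}=1>0$, so setting $a_{0}=\sbinomq{n-m}{t}/\sbinomq{k}{t}$ forces \emph{every one} of the $\sbinomq{t+2}{1}$ variables attached to $1$-subspaces to vanish. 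This removes $1+\sbinomq{t+2}{1}$ variables and one equation, leaving $\sum_{r=2}^{t+1}\sbinomq{t+2}{r}$ unknowns against $\sum_{s=1}^{t}\sbinomq{t+2}{s}$ equations, and these totals coincide by the symmetry $\sbinomq{t+2}{r}=\sbinomq{t+2}{t+2-r}$. The residual system is therefore square over the \emph{full}, non-uniform variable set, and the hypothesis of linear independence then yields uniqueness directly. The surplus you computed is exactly absorbed by the killed $1$-subspace variables; this is the concrete counting step your proposal does not take, and without it the statement's central claim remains unproved.

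Two smaller points. First, your uniform reduction is a legitimate way to \emph{produce} a candidate solution (and is how the paper computes solutions in Section~\ref{sec:examples}), but here it answers a different question from the one asked. Second, you are right that the final implication (nonnegative integer solution $\Rightarrow$ existence of the design, i.e.\ that matching totals force exact rather than average coverage) is delicate; the paper's own proof of this corollary does not address it either, so that reservation is fair but is not the missing ingredient.
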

\begin{proof}
If the variable related to the null 0-subspace of $\F_q^m$ is set to $\frac{\sbinomq{n-m}{t}}{\sbinomq{k}{t}}$, then
all the $\sbinomq{t+2}{1}$ variables related to the 1-subspaces of $\F_q^m$ are equal to 0. Therefore, the
number of equations in the new set of equations is
$$
\sum_{s=1}^t \sbinomq{t+2}{s} ~.
$$
The number of variables which are not assigned with values is this new set of equations is
$$
\sum_{r=2}^{t+1} \sbinomq{t+2}{r} ~.
$$
Clearly, these two summations are equal and the claim follows.
\end{proof}

\begin{cor}
If $m = t+1$, then the number of variables
for the $p$-punctured $q$-Steiner system $\dS_q(t,t+1,n;m)$, $m=n-p$, is equal to
$$
\sum_{r=0}^{t+1} \sbinomq{t+1}{r} ~.
$$
The number of equations in this case is equal to
$$
\sum_{s=0}^t \sbinomq{t+1}{s} ~.
$$
If we set the value of the variable which corresponds to the 0-subspace of $\F_q^m$ to be
$\frac{\sbinomq{n-m}{t}}{\sbinomq{k}{t}}$ and the equations are linearly independent, then there is a unique solution to
the set of equations in this case (when the variables
ore not constrained). If the solution consists of nonnegative integers then the
$p$-punctured $q$-Steiner system $\dS_q(t,t+1,n;m)$ exists.
\end{cor}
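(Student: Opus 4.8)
First I would specialize the two counting lemmas. Instantiating Lemma~\ref{lem:num_var} and Lemma~\ref{lem:num_eq} with $k=t+1$ and $m=t+1$ (assuming $p\ge k$, so that the lower ends $\max\{0,k-p\}$ and $\max\{0,t-p\}$ are both $0$), the variable range $0\le r\le\min\{k,m\}=t+1$ yields $\sum_{r=0}^{t+1}\sbinomq{t+1}{r}$ variables and the equation range $0\le s\le\min\{t,m\}=t$ yields $\sum_{s=0}^{t}\sbinomq{t+1}{s}$ equations, which are exactly the two displayed expressions.

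Next I would compute the gap between these totals. By the symmetry $\sbinomq{t+1}{r}=\sbinomq{t+1}{t+1-r}$ they differ by precisely $\sbinomq{t+1}{t+1}=1$, so there is a single excess variable. This is the one structural difference from the preceding corollary: for $m=t+2$ the excess equals $\sbinomq{t+2}{1}$ and is removed by using the $0$-subspace equation together with nonnegativity to annihilate all one-subspace variables, whereas here I would only fix the single variable $a_0$ attached to the $0$-subspace, setting it to $\frac{\sbinomq{n-m}{t}}{\sbinomq{k}{t}}$. After this substitution the system has $\sum_{s=0}^{t}\sbinomq{t+1}{s}$ equations in the $\sum_{r=1}^{t+1}\sbinomq{t+1}{r}$ remaining variables, and these two counts agree (again by Gaussian symmetry); hence the system is square, and under the stated hypothesis that the equations are linearly independent its coefficient matrix is nonsingular, giving a unique solution.

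I would then verify that this value of $a_0$ is the combinatorially correct one and flag a pitfall. Since $C_{(0,t),(r,t+1)}=\sbinomq{t+1-r}{t}$ vanishes for $r\ge2$, the $0$-subspace equation reads $\sbinomq{n-m}{t}=\sbinomq{k}{t}\,a_0+\sum_Y a_Y$ with the sum over the $1$-subspaces $Y$, so the prescribed value of $a_0$ is exactly the one forcing $\sum_Y a_Y=0$; consequently every $1$-subspace variable vanishes in any nonnegative solution, just as for $m=t+2$. One must nevertheless resist mimicking the $m=t+2$ proof by also deleting those variables and the $s=0$ equation: that would leave $\sum_{s=1}^{t}\sbinomq{t+1}{s}$ equations against only $\sum_{r=2}^{t+1}\sbinomq{t+1}{r}$ variables, an overdetermined system (the difference being $\sbinomq{t+1}{1}-1>0$), so for $m=t+1$ the bookkeeping must be done with the $0$-subspace variable alone.

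Finally, for the existence claim I would realize a nonnegative integer solution as a design by placing $a_Y$ copies of each $r$-subspace $Y$ of $\F_q^m$ into $\dS$. By the derivation in Section~\ref{sec:system}, the equation attached to each $s$-subspace $X$ says exactly that the $N_{(s,m),(t,n)}$ $t$-subspaces of $\F_q^n$ extending $X$ are covered the correct total number of times by the extensions of these blocks. The step I expect to be the main obstacle is upgrading this aggregate count to the definition's requirement that each $t$-subspace of $\F_q^n$ be obtained exactly once: this rests on the uniform behaviour of the extension operation on the $t$-subspaces lying over a fixed $X$, and making that uniformity precise is where the genuine work of the existence direction lies.
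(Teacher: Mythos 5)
Your proof is correct, and it supplies an argument where the paper gives none: this corollary is stated without a proof, the author evidently intending the reader to adapt the proof of the preceding corollary (the case $m=t+2$). Your bookkeeping is the right adaptation, and you correctly identify the one place where the $m=t+2$ argument does not transfer verbatim: there the surplus of variables over equations is $\sbinomq{t+2}{1}$ and is absorbed by deleting the $0$-subspace equation together with all $\sbinomq{t+2}{1}$ one-subspace variables, whereas here the surplus is exactly $\sbinomq{t+1}{t+1}=1$, so fixing the single variable attached to the $0$-subspace already squares the system, and additionally deleting the $1$-subspace variables and the $s=0$ equation would leave more equations than unknowns. Your check that the prescribed value of that variable is precisely the one forcing the sum of the $1$-subspace variables to vanish agrees with the computation implicit in the paper (via $C_{(0,t),(r,t+1)}=\sbinomq{t+1-r}{t}=0$ for $r\geq 2$). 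Finally, the reservation you raise about the last sentence of the statement --- that a nonnegative integer solution only guarantees the correct aggregate number of $t$-subspaces of $\F_q^n$ lying over each $s$-subspace of $\F_q^m$, not that each individual $t$-subspace is obtained exactly once --- points at a gap in the paper itself rather than in your argument: the paper's own examples (e.g.\ the $6$-punctured system $\dS_2(2,3,7;1)$) certify existence by exactly this aggregate count, so the author is implicitly reading the definition that way; under a literal reading of the definition, a uniformity argument of the kind you describe would indeed be required and is nowhere supplied in the paper.
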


Note, that if there is a unique solution to the set of equations, then the existence of the
related design, i.e. $p$-punctured $q$-Steiner system $\dS_q(t,k,n;m)$ is not guaranteed yet.
Only if the unique solution is a nonnegative integer solution, then the design exists.
We also did not consider the linear independence of the equations, although it can be proved
in some cases. It is also important to understand that the number of equations and the number
of variables can be large, and in most cases the number of variables is much larger than
the number of equations. In this case there are many free variables, which usually make it even harder
to find if the set of equations have a solution with nonnegative integer values for the variables.

In the sequel we will examine cases, where the set of equations have a solution
with nonnegative integers. In these cases it will be proved that the
$p$-punctured $q$-Steiner system $\dS_q(t,k,n;m)$ exists. In most cases we will consider
\emph{uniform solutions}, i.e. solutions in which for each $r$, the number of $r$-subspaces
in the systems is equal for any two $r$-subspaces of~$\F_q^m$, i.e. the related variables have the same value. The
related design will be called a \emph{uniform design}. For such systems we can reduce
the number of variables and the number of equations. The choice of uniform solution is usually a good choice
when the equations are linearly independent. In such a case the solution is uniform
in many cases.

Let $\dS$ be a uniform $p$-punctured $q$-Steiner system $\dS_q(t,k,n;m)$, $m=n-p$.
Let $Z$ be an $r$-subspace of $\F_q^m$ and let $X_{r,m}$ be the number of appearances
of $Y$ in $\dS$.
The conclusion of our discussion is the following set of equations for uniform designs.

%

\begin{theorem}[\bf Equations for a uniform $p$-punctured $q$-Steiner system $\dS_q(t,k,n;m)$]
Let $\dS$ be a uniform $p$-punctured $q$-Steiner system $\dS_q(t,k,n;m)$, $m=n-p$.
For each $s$, $\max \{0,t-p\} \leq s \leq \min \{t,m\}$, the following equation must be satisfied.

$$
N_{(s,m),(t,n)} = \sum_{r=\max \{ k-p,s  \} }^{\min \{ k-t+s,m \} }  D_{s,r,m} \cdot C_{(s,t),(r,k)} \cdot X_{r,m}~.
$$
\end{theorem}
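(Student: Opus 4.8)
The plan is to fix an arbitrary dimension $s$ in the admissible range $\max\{0,t-p\}\le s\le\min\{t,m\}$ and an arbitrary $s$-subspace $X$ of $\F_q^m$, and then to count in two ways the collection of $t$-subspaces of $\F_q^n$ that are extensions of $X$ (equivalently, that $p$-puncture to $X$). By the lemma evaluating $N_{(s,m),(t,n)}$ there are exactly $N_{(s,m),(t,n)}$ such $t$-subspaces, and this quantity supplies the left-hand side of the asserted equation. The right-hand side will come from counting the very same $t$-subspaces according to which block of $\dS$ produces them.

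For the covering side I would invoke the defining property of the $p$-punctured $q$-Steiner system $\dS_q(t,k,n;m)$: when every block of $\dS$ is extended $p$ times (equal blocks extended together, in parallel), each $t$-subspace of $\F_q^n$ is obtained exactly once. Restricting to the $t$-subspaces that extend $X$, each such $t$-subspace $T$ is produced exactly once, inside a single extended $k$-subspace $W$ of $\F_q^n$, whose $p$-punctured image is the block $Y\in\dS$ that generated it. Since $T\subseteq W$ and $T$ punctures to $X$, the block $Y$ must contain $X$; by Lemma~\ref{lem:r_dim} its dimension $r$ satisfies $\max\{k-p,s\}\le r\le\min\{k-t+s,m\}$, which is precisely the summation range in the theorem. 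Blocks whose dimension lies outside this range contain no $t$-subspace extending $X$ and therefore contribute nothing.

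It then remains to count, for each admissible $r$, how many $t$-extensions of $X$ are contributed. First, by the lemma computing $D_{s,r,m}$, there are $D_{s,r,m}=\sbinomq{m-s}{r-s}$ distinct $r$-subspaces of $\F_q^m$ that contain $X$. By the uniformity assumption each such $r$-subspace occurs $X_{r,m}$ times in $\dS$. Finally, by the lemma computing $C_{(s,t),(r,k)}$, a single extension of an $r$-subspace $Y\supseteq X$ to a $k$-subspace covers exactly $C_{(s,t),(r,k)}$ of the $t$-subspaces extending $X$. Multiplying these three factors and summing over $r$ yields $\sum_{r} D_{s,r,m}\,C_{(s,t),(r,k)}\,X_{r,m}$, and equating this with $N_{(s,m),(t,n)}$ gives the claimed equation.

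The hard part will be justifying that this three-fold product genuinely counts each $t$-extension of $X$ exactly once, with neither collision nor omission. The danger is that two distinct appearances of a block, or two distinct blocks, extend to $k$-subspaces that share a common $t$-extension of $X$, which would corrupt the product $D_{s,r,m}\cdot X_{r,m}\cdot C_{(s,t),(r,k)}$. What rules this out is exactly the ``extended together'' clause of the definition, combined with the multiplicity bookkeeping already established in Theorem~\ref{thm:Sd1}, where a $t$-subspace not belonging to the derived system appears $q^t$ times and the parallel extension resolves these copies into genuinely distinct $t$-subspaces of $\F_q^n$. Making this disjointness explicit --- that the $C_{(s,t),(r,k)}$ copies inside one extended block and the copies coming from different blocks are pairwise distinct once lifted to $\F_q^n$ --- is the only step that is not a direct substitution of the previously computed quantities, and it is where I would concentrate the argument.
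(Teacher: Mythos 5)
Your proposal is correct and follows essentially the same double-counting argument as the paper's own (very terse) proof: the left side counts the $t$-extensions of a fixed $s$-subspace via $N_{(s,m),(t,n)}$, and the right side decomposes them over the containing $r$-subspaces using $D_{s,r,m}$, $C_{(s,t),(r,k)}$, and the uniform multiplicities $X_{r,m}$. In fact you are more careful than the paper, which does not address the disjointness issue you rightly flag as the only nontrivial step.
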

\begin{proof}
The left side of the equation is the number of distinct $t$-subspaces in $\F_q^n$
which are formed by extending a given $s$-subspace $Y$ of $\F_q^m$. The right hand side
is summing over all the $r$-subspaces of $\F_q^m$ which contain $Y$ (the range
is obtained from Lemma~\ref{lem:r_dim}), where $D_{s,r,m}$ is the number of $r$-subspaces
which contain $X$, $C_{(s,t),(r,k)}$ is the number of appearances of $Y$ in such a given $r$-subspace,
and $X_{r,m}$ is the number of appearances of each $r$-subspace in~$\dS$.
\end{proof}

\vspace{0.2cm}

\section{Examples for Existed Systems}
\label{sec:examples}

In this section we will give examples of $p$-punctured $q$-Steiner system
$\dS_q (t,k,n;m)$ for various parameters. We start with the 3-punctured
$q$-Fano plane $\dS_q(2,3,7;4)$ and continue with $\dS_q(3,4,8;4)$, $\dS_2(3,4,8;5)$,
$\dS_q(4,5,11;6)$, and $\dS_q(5,6,12;6)$. We conclude with a more general example for
the $k$-punctured $q$-Steiner system $\dS_q (3,4,2k;k)$, $k \equiv 2$ or $4~(\text{mod}~6)$, $k \geq 4$.

\vspace{1.4cm}

\noindent
{\bf The 3-punctured $q$-Steiner system $\dS_q(2,3,7;4)$:}

\vspace{0.1cm}

There are $\sbinomq{4}{s}$ equations
for each $0 \leq s \leq 2$, for a total of $1+(q^3+q^2+q+1)+(q^2+1)(q^2+q+1)$ equations.
There are $\sbinomq{4}{r}$ variables
for each $0 \leq r \leq 3$, for a total of $1+(q^3+q^2+q+1)+(q^2+1)(q^2+q+1) +(q^3+q^2+q+1)$ variables.

For $s=0$, there is a unique equation for the $0$-subspace (the null space) given by
$$
\sbinomq{3}{2} = (q^2+q+1)a +  b_{i_1}+b_{i_2} + \cdots + b_{i_{q^3+q^2+q+1}}~,
$$
where $a$ is the unique variable related to the $0$-subspace, while $b_{i_j}$ is
a variable for an $1$-subspace.
To have a unique solution we must have linearly independent equations in which
the number of variables equals the number of equations.
Hence, we set $a=1$ which implies that $b_{i_j}=0$ for each~$j$.

For $s=1$, there are $q^3+q^2+q+1$ equations related to the $1$-subspaces, where each equation is of the form
$$
q^2 \sbinomq{3}{1} =  (q^2+q)b + c_{i_1}+c_{i_2} + \cdots + c_{i_{q^2+q+1}}~,
$$
where $b$ is a variable related to an $1$-subspace and hence $b=0$, while $c_{i_j}$ is
a variable for a $2$-subspace.

For $s=2$, there are $(q^2+1)(q^2+q+1)$ equations for $2$-subspaces, where each equation is of the form
$$
q^6 \sbinomq{3}{0} = q^2 c + d_{i_1}+d_{i_2} + \cdots + d_{i_{q+1}}~,
$$
where $c$ is a variable related to a $2$-subspace, while $d_{i_j}$ is
a variable related to a $3$-subspace.

This system of equations has a unique solution, which is also a solution for
a uniform design (uniform punctured system), $X_{0,4}=1$, $X_{1,4}=0$, $X_{2,4}=q^2$, and $X_{3,4}=q^4(q-1)$.

\vspace{0.2cm}

\noindent
{\bf The 4-punctured $q$-Steiner system $\dS_q(3,4,8;4)$:}

\vspace{0.1cm}

There are $\sbinomq{4}{s}$ equations
for each $0 \leq s \leq 3$, and
there are $\sbinomq{4}{r}$ variables
for each $0 \leq r \leq 4$.
To have a unique solution, which also forms a uniform design, we set $X_{0,4}=1$
which implies that $X_{1,4}=0$, and the system of equations has
the unique solution, $X_{2,4}=q^2 (q^2+1)$, $X_{3,4}=q^4(q^4-1)$, and $X_{4,4}=q^{12}-q^{11}+q^7$.

\vspace{0.2cm}

\noindent
{\bf The 3-punctured $q$-Steiner system $\dS_q(3,4,8;5)$:}

\vspace{0.1cm}

It is left for the reader to verify that the following set $\T$ is a 3-punctured $q$-Steiner system $\dS_q(3,4,8;5)$.
contains:
\begin{enumerate}
\item One 1-subspace which is punctured into the unique 0-subspace of $\F_q^4$.

\item The $q^2(q^2+1)(q^2+q+1)$ distinct 2-subspaces of $\F_q^5$, which are
punctured into a 2-subspace of $\F_q^4$, each one is contained exactly once in $\T$.

\item The $(q^2+q+1)(q^2+1)$ distinct 3-subspaces of $\F_q^5$, which are
punctured into a 2-subspace of $\F_q^4$, each one is contained $q^4$ times in $\T$.

\item The $q^3 (q^3+q^2+q+1)$ distinct 3-subspaces of $\F_q^5$, which are
punctured into a 3-subspace of $\F_q^4$, each one is contained $q(q^3-1)$ times in $\T$.

\item The $q^3+q^2+q+1$ distinct 4-subspaces of $\F_q^5$, which are
punctured into a 3-subspace of $\F_q^4$, each one is contained $q^7 (q-1)$ times in $\T$.

\item The $q^4$ distinct 4-subspaces of $\F_q^5$, which are
punctured into the unique 4-subspace of $\F_q^4$, each one is contained $q^8-q^7+q^3$ times in $\T$.
\end{enumerate}

\noindent
{\bf The 5-punctured $q$-Steiner system $\dS_q(4,5,11;6)$:}

\vspace{0.1cm}

There are $\sbinomq{6}{s}$ equations
for each $0 \leq s \leq 4$ and
there are $\sbinomq{6}{r}$ variables
for each $0 \leq r \leq 5$.
To have a uniform design we set $X_{0,6}=1$ which implies that $X_{1,6}=0$ and the system of equations
will have a unique solution $X_{2,6}= q^2 (q^2+1)$, $X_{3,6}=q^9+q^7-q^4$,
$X_{4,6}=q^{14}-q^9+q^7$, and $X_{5,6}=(q^{18}+q^{11})(q-1)$.

\vspace{0.2cm}

\noindent
{\bf The 6-punctured $q$-Steiner system $\dS_q(5,6,12;6)$:}

\vspace{0.1cm}

There are $\sbinomq{6}{s}$ equations
for each $0 \leq s \leq 5$ and
there are $\sbinomq{6}{r}$ variables
for each $0 \leq r \leq 6$.
A solution for a uniform design for the system of equations is $X_{0,6}=1$, $X_{1,6}=0$,
$X_{2,6}=q^2 (q^4+q^2+q)$, $X_{3,6}=q^4 (q^8+q^6+q^5-1)$, $X_{4,6}=q^7 (q^{11}+q^9+q^7-q^6+1)$,
$X_{5,6}=q^{11} (q^{13}-q^7+q^6-1)$, and $X_{6,6}=q^{16} (q^{14}-q^{13}+q^7-q^6+1)$.

\vspace{0.2cm}

\noindent
{\bf The $k$-punctured $q$-Steiner system $\dS_q (3,4,2k;k)$, $k \equiv 2$ or $4~(\text{mod}~6)$, $k \geq 4$:}

\vspace{0.1cm}

In this case, we will consider only a possible uniform design.
For this design we have that $X_{0,k}=  \frac{\sbinomq{k}{3}}{\sbinomq{4}{3}}$,
$X_{1,k}=0$, $X_{2,k}=q^{k-2} \frac{q^k-1}{q^2-1}$, $X_{3,k}=q^k (q^k-1)$, and
$X_{4,k}=\frac{(q^{3k}-q^{2k+3}+q^{k+3})(q-1)}{q^{k-3}-1}$. We note that the reminder in the division of the polynomials
in $X_{4,k}$ is $q^7 - q^6$ and hence $X_{4,k}$ is an integer only for
$k=4$ and all $q$'s. This solution was given in a previous example for $\dS_q(3,4,8;4)$.

\vspace{0.2cm}

\noindent
{\bf The $k$-punctured $q$-Steiner system $\dS_q (2,3,2k+1;k+1)$, $k \equiv 1$ or $3~(\text{mod}~6)$, $k \geq 3$:}

\vspace{0.1cm}

The number of equations in the system is $\sum_{s=0}^2 \sbinomq{k+1}{s}$. The number of variables
is $\sum_{r=0}^3 \sbinomq{k+1}{r}$. We will consider only uniform designs and hence we only have 3 equations
and 4 variables.

The first equation for the $0$-subspace of $\F_q^{k+1}$ is $N_{(0,k+1),(2,2k+1)}= D_{0,0,k+1} \cdot C_{(0,2)(0,3)} \cdot X_{0,k+1}
+ D_{0,1,k+1} \cdot C_{(0,2)(1,3)} \cdot X_{1,k+1}$
which is equal to $\sbinomq{k}{2} =  \sbinomq{3}{2} \cdot X_{0,k+1}+ \sbinomq{k+1}{1} \cdot X_{1,k+1}$. If we set
$X_{0,k+1} = \frac{\sbinomq{k}{2}}{\sbinomq{3}{2}}$, then we have $X_{1,k+1}=0$.

The second equation for 1-subspaces is $N_{(1,k+1),(2,2k+1)}= D_{1,1,k+1} \cdot C_{(1,2)(1,3)} \cdot X_{1,k+1}
+ D_{1,2,k+1} \cdot C_{(1,2)(2,3)} \cdot X_{2,k+1}$.
Since $N_{(1,k+1),(2,2k+1)}=q^{k-1} \sbinomq{k}{1} = q^{k-1} \frac{q^k-1}{q-1}$ and
$D_{1,2,k+1} = \sbinomq{k}{1}= \frac{q^k-1}{q-1}=q^{k-1}+q^{k-2}+\cdots +q+1$,
it follows that
$$
q^{k-1} \sbinomq{k}{1} = q^{k-1} \frac{q^k -1}{q-1} =  (q^2+q) X_{1,k+1}+ (q^{k-1}+q^{k-2}+\cdots +q+1 ) X_{2,k+1} ~.
$$

The third equation for 2-subspaces is
$N_{(2,k+1),(2,2k+1)}= D_{2,2,k+1} \cdot C_{(2,2)(2,3)} \cdot X_{2,k+1} + D_{2,3,k+1} \cdot C_{(2,2)(3,3)} \cdot X_{3,k+1}$.
Since $N_{(2,k+1),(2,2k+1)}=q^{2k}$ and $D_{2,3,k+1} = \sbinomq{k-1}{1}= \frac{q^{k-1}-1}{q-1}=q^{k-2}+q^{k-3}+\cdots +q+1$,
it follows that
$$
q^{2k} =  q^2 X_{2,k+1} + (q^{k-2}+q^{k-3}+\cdots +q+1) X_{3,k+1} ~.
$$

The solution for this set of equations is $X_{0,k+1}= \frac{\sbinomq{k}{2}}{\sbinomq{3}{2}}$,
$X_{1,k+1}=0$, $X_{2,k+1}=q^{k-1}$, and $X_{3,k+1}=q^{k+1} (q-1)$.

\section{A Recursive Construction}
\label{sec:recursion}

In this section we present a recursive construction for a $p$-punctured
$q$-Steiner system $\dS_q (2,3,2k+1;k+1 +\lfloor \frac{k+1}{3} \rfloor )$,
$p=k- \lfloor \frac{k+1}{3} \rfloor$, where
$k \equiv 1$ or $3~(\text{mod}~6)$. The basis for the construction
is the trivial $q$-Steiner system $\dS_q(2,3,3)$.

Let $k \equiv 1$ or $3~(\text{mod}~6)$, which implies that $2k+1 \equiv 3$ or $7~(\text{mod}~12)$,
and assume that there exists a $p$-punctured $q$-Steiner system $\dS_q(2,3,k;\lfloor \frac{k+1}{3} \rfloor)$, $p=k- \lfloor \frac{k+1}{3} \rfloor$.
Let $\dS$ be a $k$-punctured $q$-Steiner system $\dS_q(2,3,2k+1;k+1)$ presented in Section~\ref{sec:examples}.
For $\dS$ we have that $X_{0,k+1}= \frac{\sbinomq{k}{2}}{\sbinomq{3}{2}}$,
$X_{1,k+1}=0$, $X_{2,k+1}=q^{k-1}$, and $X_{3,k+1}=q^{k+1} (q-1)$.
In the recursive construction, we will generate a system $\T$, a $p$-punctured
$q$-Steiner system $\dS_q(2,3,2k+1;k+1 + \lfloor \frac{k+1}{3} \rfloor )$, $p=k- \lfloor \frac{k+1}{3} \rfloor$.

Let $r= \lfloor \frac{k+1}{3} \rfloor$ be the number of columns that
should be appended to the subspaces
(of dimension 0, 2, and 3) of $\dS$ to form $\T$.
To each one of the $\sbinomq{k+1}{3}$ distinct $3$-subspaces of
$\dS$ we append the $q^{3r}$ possible combinations of $r$ columns.
Each column has $q^3$ possible combinations by Lemma~\ref{lem:addNone}.
Since $X_{3,k+1}=q^{k+1} (q-1)$, it follows that each such combination
(a $3$-subspace of $\F_q^{k+1+r}$), whose $r$-punctured subspace is also a 3-subspace,
will appear $q^{k+1-3r} (q-1)$ times in $\T$.
To the $\frac{\sbinomq{k}{2}}{\sbinomq{3}{2}}$ $0$-subspaces of $\dS$ we append
the subspaces of a $(k-r)$-punctured $q$-Steiner system $\dS_q(2,3,k;r)$ system which
exists by our assumption. Hence, we have completed
the extension of the 0-subspaces and 3-subspaces of $\dS$. To complete our construction we have
to extend the 2-subspaces of $\dS$.

For the extension of the 2-subspaces we need two more concepts, namely spreads and large sets in $\cG_q(k+1,2)$
(known as 1-spreads and 1-parallelisms in PG($k,q$) ~).
A \emph{spread} in $\cG_q(k+1,2)$ is a set of 2-subspaces
whose nonzero elements form a partition of all the elements of $\F_q^{k+1} \setminus \{ 0 \}$, i.e.
each nonzero vector of $\F_q^{k+1}$ appears in exactly one 2-subspace of the spread.
In other words, a spread in $\cG_q(k+1,2)$ is a $q$-Steiner system $\dS_q(1,2,k+1)$.
A \emph{large set} (1-parallelism) of $q$-Steiner systems $\dS_q(1,2,k+1)$ is a partition
of all 2-subspaces of $\cG_q(k+1,2)$ into $q$-Steiner systems $\dS_q(1,2,k+1)$ (spreads).
If $q=2$, then such large sets are known to exist whenever $k+1$ is even~\cite{Bak76}

We continue by considering the case of $q=2$.
Note, that $k+1$ is even and hence there exists a spread in $\cG_2(k+1,2)$.
The size of such spread is $\frac{2^{k+1}-1}{3}$, i.e. it contain
$\frac{2^{k+1}-1}{3}$ subspaces. The total number of
subspaces in $\cG_2 (k+1,2)$ is $\sbinomtwo{k+1}{2} = \frac{(2^{k+1}-1)(2^k-1)}{3}$.
There exists a partition (large set) of these 2-subspaces into disjoint spreads and
hence there are $2^k-1$ disjoint spreads in such a large set. We continue and
arbitrarily partition these $2^k-1$ disjoint spreads into $2^r$ sets of spreads, one set with
$2^{k-r}-1$ spreads and $2^r-1$ sets each one with $2^{k-r}$ spreads.
To each one of these $2^r-1$ sets we assign arbitrarily a different nonzero row vector
of length $r$, and the all-zero vector of length $r$ is assigned to the set of size $2^{k-r}-1$.

For demonstration of the construction, each 2-subspace of
$\dS$ is represented by
a $3 \times (k+1)$ matrix, each 2-subspace of
$\T$ is represented by
a $3 \times (k+1+r)$ matrix, and  each 3-subspace of
$\T$ is represented by
a $7 \times (k+1+r)$ matrix.

Consider now these two sets of spreads:
\begin{enumerate}
\item For the set which contains $2^{k-r}-1$ spreads, each 2-subspace $X$ from
each spread is contained $2^{k-1}$ times in the
$\dS$. The 2-subspaces
$X$ is extended to several 2-subspaces in $\F_2^{k+1+r}$ as follows.
The first $k+1$ columns which represent these 2-subspaces are equal to the
$3 \times (k+1)$ matrix which represents $X$. In the last $r$ columns
there are 4 possible options in each column and thus $2^{2r}$ distinct combinations
of $r$ columns. Each such combination will appear $2^{k-1-2r}$ times in $\T$.

\item For a set which contains $2^{k-r}$ spreads (there are $2^r-1$ such sets in the
partition), each 2-subspace $X$ from each
spread is contained $2^{k-1}$ times in $\dS$.
There is a nonzero vector $v$ of length $r$ which is assigned to this set.
The 2-subspace $X$ is extended to several 3-subspaces in $\F_2^{k+1+r}$ as follows.
The first three rows in the first $k+1$ columns which represent these subspaces are equal to the
$3 \times (k+1)$ matrix which represents $X$.
The next three rows in these $k+1$ columns are also equal to the
$3 \times (k+1+r)$ matrix which represents $X$. The seventh and the last
row in these $k+1$ columns is a row of \emph{zeroes}.
We turn now to complete the last $r$ columns in the $7 \times (k+1+r)$ matrices
which represents the 3-subspaces extended from $X$. The entries of the last (seventh) row in these columns
are assigned with the values of $v$. The first column in which $v$ has a \emph{one}
has values which corresponds to the unique extension from a 2-subspace to a 3-subspace
as proved in Lemma~\ref{lem:addOne}. Finally, in each other column there are 4 possible distinct
combinations: if the related entry in $v$ is a \emph{zero}, it relates to the extension
from 2-subspace to 2-subspace; and if the related entry in $v$ is a \emph{one}
it relates to the extension from 3-subspace to 3-subspace in which there are
4 combinations, out of the 8 combinations, with a \emph{one} in a given coordinate.
In total there are $2^{2(r-1)}$ distinct combinations for these $r$ columns.
Each such combination will appear $2^{k-1-2(r-1)}$ times in $\T$.
\end{enumerate}

For a proof that $\T$ is $p$-punctured
$q$-Steiner system $\dS_q(2,3,2k+1;k+1 + r )$, $p=k-r$,
follows immediately from the described construction.
The major steps of the proof will be given in a the specific case
of $\dS(2,3,7;5)$ in Section~\ref{sec:2punctured}.

Generalization for $q>2$ is similar, but the requirement is the existence of large set
of $q$-Steiner system $\dS_q(1,2,k+1)$, where $k \equiv 1$ or $3~(\text{mod}~6)$.
Such large set is known to exist for $q>2$ only if
$k+1$ is a power of 2~\cite{Beu74}, making the possible generalizations for $q>2$
with limited number of parameters. An example of this construction for general $q$
and $2k+1=7$ is given in Section~\ref{sec:2punctured}.
The recursive construction, with the basis of $\dS_q(2,3,3)$, leads to the following theorem.

\begin{theorem}
There exists a $p$-punctured
$q$-Steiner system $\dS_q (2,3,2^\ell -1;2^\ell -1 -\lfloor \frac{2^\ell -1}{3} \rfloor )$,
$p=\lfloor \frac{2^\ell -1}{3} \rfloor$, $\ell \geq 3$.
\end{theorem}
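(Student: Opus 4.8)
The plan is to prove the statement by induction on $\ell$, recognizing it as the special instance of the recursive construction of this section in which the small parameter is a Mersenne number. Write $k_\ell = 2^\ell - 1$, so that $2k_\ell + 1 = 2^{\ell+1}-1 = k_{\ell+1}$ and the recursion, with its small side set to $k = k_\ell$, advances the index by one. The first thing I would record is the two arithmetic facts that make the construction applicable along this chain. First, $2^m \equiv 2 \pmod 6$ for $m$ odd and $2^m \equiv 4 \pmod 6$ for $m$ even, so $k_\ell = 2^\ell - 1 \equiv 1$ or $3 \pmod 6$ for every $\ell$; this is exactly the congruence required to invoke the construction. Second --- and this is the reason the family is pinned to $n = 2^\ell - 1$ --- in each step, where the small side is $k = k_j = 2^j - 1$ with $j \ge 2$, we have $k+1 = 2^j$, a power of two, so the large set ($1$-parallelism) of spreads $\dS_q(1,2,k+1)$ that the construction consumes is guaranteed to exist for every prime power $q$ by Beutelspacher~\cite{Beu74} (and, for $q=2$, already by Baker~\cite{Bak76}). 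Thus the construction runs for all $q$, not merely for $q=2$.

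The base case is the trivial $q$-Steiner system $\dS_q(2,3,3) = \{\F_q^3\}$; puncturing it repeatedly (Theorem~\ref{thm:Sd1} and Lemma~\ref{lem:punct_punct}) produces $\dS_q(2,3,3;1)$, which is the input $\dS_q(2,3,k;\lfloor (k+1)/3 \rfloor)$ for $k=3$, and feeding it to the construction yields the $\ell=3$ instance $\dS_q(2,3,7;5)$. For the inductive step I would assume the design $\dS_q(2,3,k_\ell;m_\ell)$ with $m_\ell = k_\ell - \lfloor k_\ell/3 \rfloor$, then do two things. Since the construction with small side $k = k_\ell$ requires the input dimension $\lfloor (k_\ell+1)/3 \rfloor = \lfloor 2^\ell/3 \rfloor$, I first check $m_\ell \ge \lfloor 2^\ell/3 \rfloor$ (clear, as $m_\ell$ is roughly $\tfrac{2}{3}\,2^\ell$ while the required dimension is roughly $\tfrac{1}{3}\,2^\ell$) and apply Lemma~\ref{lem:punct_punct} exactly $m_\ell - \lfloor 2^\ell/3 \rfloor$ times to bring $\dS_q(2,3,k_\ell;m_\ell)$ down to $\dS_q(2,3,k_\ell;\lfloor 2^\ell/3 \rfloor)$. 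Running the construction on this input then outputs $\dS_q(2,3,k_{\ell+1};\,2^\ell + \lfloor 2^\ell/3 \rfloor)$ with puncturing parameter $k_\ell - \lfloor 2^\ell/3 \rfloor = \lfloor k_{\ell+1}/3 \rfloor$.

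It remains to verify that the output dimension equals the target $m_{\ell+1} = k_{\ell+1} - \lfloor k_{\ell+1}/3 \rfloor$, i.e. the identity $2^\ell + \lfloor 2^\ell/3 \rfloor = (2^{\ell+1}-1) - \lfloor (2^{\ell+1}-1)/3 \rfloor$. I would settle this by writing $2^\ell = 3a + s$ with $s = 2^\ell \bmod 3 \in \{1,2\}$: the left side is $4a+s$, while the right side evaluates to $4a+1$ when $s=1$ and to $4a+2$ when $s=2$, so both sides equal $4a+s$. This closes the induction and yields the claimed design for all $\ell \ge 3$ and all prime powers $q$. I do not expect a genuine obstacle here, since all the real content lives in the correctness of the recursive construction of this section (whose detailed verification is deferred to Section~\ref{sec:2punctured}); the work of the theorem itself is the bookkeeping above, and the only point that demands care is recognizing that $k+1 = 2^j$ being a power of two is precisely what frees the construction from the $q=2$ restriction, letting the required large set of spreads exist for every $q$.
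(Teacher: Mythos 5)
Your proof is correct and follows essentially the same route as the paper: the theorem is obtained by iterating the recursive construction of this section starting from the trivial system $\dS_q(2,3,3)$, with the restriction to $n=2^\ell-1$ coming precisely from the fact that the required large set of spreads in $\cG_q(2^j,2)$ is known to exist for all $q$ only when the ambient dimension is a power of two. Your explicit bookkeeping (the congruence $2^\ell-1\equiv 1$ or $3\pmod 6$, the reduction of the inductive input via Lemma~\ref{lem:punct_punct}, and the identity $2^\ell+\lfloor 2^\ell/3\rfloor=(2^{\ell+1}-1)-\lfloor(2^{\ell+1}-1)/3\rfloor$) is exactly what the paper leaves implicit.
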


For $q=2$ the construction can be applied also starting with the $q$-Steiner system $\dS_2 (2,3,13)$~\cite{BEOVW}.

\section{The structure of the $q$-Fano plane}
\label{sec:structure}

In this section, we present the structure of the $q$-Fano plane (if exists)
based on its punctured designs.
The $q$-Fano plane $\dS_2(2,3,7)$ is the one on which most research was done in the past, e.g.~\cite{BKN15,EtVa11a,HeSi16,Tho96}.
The size of the $q$-Fano plane for $q=2$
is smaller and hence with the mentioned figures for various substructures of
the $q$-Fano plane, one can take it as a toy example to
try and construct it by hand, needless to say it might be easier to check its existence with computer search.
Finally, note that sometimes we have to consider for $q>2$ 1-subspaces, instead of vectors for $q=2$.

Throughout our discussion, let $\dS$ be a $q$-Steiner system $\dS_q(2,3,7)$.
We start with a uniform solution for the 3-punctured $q$-Steiner system
$\dS_q(2,3,7;4)$. Such a uniform solution, given in Section~\ref{sec:examples} implies that $X_{0,4}=1$ which implies
that $X_{1,4}=0$, $X_{2,4}=q^2$, and $X_{3,4}=q^4(q-1)$. W.l.o.g. we can set $X_{0,4}=1$,
since in any system, w.l.o.g. one subspace can be chosen. Furthermore, in this case
$X_{0,4}=1$ implies that the design is uniform. $X_{0,4}=1$ implies that the 3-subspace whose
first four columns are all-zero columns is contained in $\dS$. Let $Z_1$ denote this
3-subspace in which the first four columns are all-zero. For symmetry let $Z_2$ denote
the 3-subspace of $\F_q^7$ in which the last four columns are all-zero columns.

In $\dS$, each 1-subspace of $\F_q^7$ is contained in exactly $\frac{q^6-1}{q-1}$ 3-subspaces.
By puncturing the last coordinate of each 3-subspace of $\dS$,
all the 3-subspaces which contain the vector 0000001 will be punctured into
a spread, i.e. a $q$-Steiner system $\dS_q(1,2,6)$.

Next, in our exposition we exclude $Z_1$ from $\dS$ for the current paragraph.
Each 3-subspace of $\dS$ which contains a nonzero vector which starts with four \emph{zeroes}
is 3-punctured into a 2-subspace of $\F_q^4$. There are $\frac{q^3-1}{q-1}$ 1-subspaces which contain such vectors, each one is contained
in $q^2 (q^2+1)$ distinct 3-subspaces of $\dS \setminus \{ Z_1 \}$ (since the only 3-subspace of $\dS$ which
contains two such 1-subspaces is $Z_1$; thus, clearly two of them cannot be contained together in the same 3-subspace of $\dS \setminus \{ Z_1 \}$)
for a total of $q^2 (q^2+1) (q^2+q+1)$ such 3-subspaces which are punctured
into the $q^2 (q^2+1) (q^2+q+1)$ (non-distinct) 2-subspaces
of the 3-punctured $q$-Steiner system $\dS_q(2,3,7;4)$ derived from $\dS$.
Since each 1-subspace of $\F_q^7$ is contained in exactly one 3-subspace with each
other 1-subspace of $\F_q^7$, it follows that each such 1-subspace is responsible
for exactly $q^2 (q^2+1)$ 2-subspaces (some of them are identical) of the 3-punctured $q$-Steiner system $\dS_q(2,3,7;4)$.
Hence, each nonzero prefix of length 4 of vectors of $\F_q^7$ (there are $q^4-1$ such nonzero prefixes) appears $\frac{q^2 (q^2+1) \cdot (q^2-1)}{q^4-1}=q^2$
times in these $q^2 (q^2+1)$ 2-subspaces. There are many possible partitions to obtain such $q^2+q+1$ 3-punctured sets
of $q^2 (q^2+1)$ 2-subspaces for this purpose
(each part in this partition is obtained by puncturing
three times $q^2 (q^2+1)$ 3-subspaces of $\F_q^7$ which contain the same vector which starts with four \emph{zeroes}).
One suggestion was given in the recursive construction of Section~\ref{sec:recursion}.
It will be discussed again in Section~\ref{sec:2punctured}.

We continue by imposing w.l.o.g. a certain structure on $\dS$.
Many structures can be imposed, each one will imply a different specification for $\dS$. The one that we suggest
now seems to be one of the most successful ones for possible construction of the
$q$-Fano plane. We already forced the 3-subspace $Z_1$
to be a 3-subspace in $\dS$. Now, we assume
that w.l.o.g. also the 3-subspace whose last four columns are all-zero, i.e. $Z_2$, is also a 3-subspace in $\dS$.

Next, we will show why w.l.o.g. it can be assumed that $Z_2 \in \dS$.
In the 3-punctured $q$-Steiner system $\dS_q(2,3,7;4)$ each 3-subspace of $\F_q^4$ appears exactly $q^4 (q-1)$ times.
There are $\frac{q^4-1}{q-1}$ such 3-subspaces of $\F_q^4$, for $q^3$ of them, the first three columns
form the unique 3-subspace in $\F_q^3$, and these three columns can be followed by the all-zero column (which is one
of the $q^3$ possible extensions of the unique 3-subspace of $\F_q^3$ by Lemma~\ref{lem:addNone}). Let $X$
be a 3-subspace of $\dS$ which is 3-punctured to these four columns. To continue
our discussion, we need the following simple lemma

\begin{lemma}
\label{lem:linear_comb}
If $\hat{\dS}$ is a $q$-Steiner system $\dS_q(t,k,n)$, then the system obtained by replacing
the $j$th column (for any $j$, $1\leq j \leq n$) in all the $k$-subspaces of $\hat{\dS}$,
by a linear combination of columns, which contain the $j$th column (in any nonzero
multiplicity), is also a $q$-Steiner system $\dS_q(t,k,n)$.
\end{lemma}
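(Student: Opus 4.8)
The plan is to recognize the column-replacement operation as nothing more than an invertible linear transformation of $\F_q^n$ applied to every vector of every block, and then to invoke the elementary fact that such a transformation is an automorphism of the subspace lattice which preserves dimension and containment. Concretely, represent each block $X \in \hat{\dS}$ by the $(q^k-1) \times n$ array whose rows are the nonzero vectors of $X$. Replacing the $j$th column by a fixed linear combination $\sum_{i=1}^n c_i\,(\text{column } i)$, with $c_j \neq 0$, is exactly right multiplication of every such array by the $n \times n$ matrix $M$ that agrees with the identity in all columns except the $j$th, whose entries are $c_1,\dots,c_n$. Indeed, for a row vector $v$ the product $vM$ has $j$th coordinate $\sum_i c_i v_i$ and all other coordinates unchanged, so the rows transform precisely as prescribed.

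First I would verify that $M$ is invertible. Since $M$ equals the identity outside column $j$, each column $i \neq j$ is the standard basis vector $e_i$; a cofactor expansion along column $j$ then gives $\det M = c_j$, because every cofactor $C_{ij}$ with $i \neq j$ has an all-zero column and hence vanishes. The hypothesis $c_j \neq 0$ therefore forces $M$ to be invertible, so the map $\phi : \F_q^n \to \F_q^n$, $v \mapsto vM$, is a linear isomorphism. It matters here that the same $M$ is applied to every block, so that the operation is a single global map rather than a block-dependent one.

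Next I would record the two standard consequences of $\phi$ being a linear isomorphism: $\phi$ restricts to a bijection of $\cG_q(n,d)$ onto itself for every $d$ (it preserves dimension), and it preserves containment, that is, $U \subseteq W$ if and only if $\phi(U) \subseteq \phi(W)$. The new system is precisely $\{\phi(X) : X \in \hat{\dS}\}$, and since $\phi$ is injective on $\cG_q(n,k)$, distinct blocks remain distinct, so the new collection has the correct cardinality and consists of $k$-subspaces of $\F_q^n$.

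Finally, to verify the Steiner property, take any $t$-subspace $T'$ of $\F_q^n$ and set $T = \phi^{-1}(T')$, again a $t$-subspace. By the containment-preserving property, $T' \subseteq \phi(X)$ if and only if $T \subseteq X$; since $\hat{\dS}$ is a $q$-Steiner system, $T$ is contained in exactly one block $X$, hence $T'$ is contained in exactly one block $\phi(X)$ of the new system. Thus every $t$-subspace is covered exactly once, and the new system is again a $\dS_q(t,k,n)$. I expect no genuine obstacle here; the only point demanding care is the invertibility of $M$, which is exactly where the nonzero-multiplicity hypothesis on the $j$th column is used — were that coefficient zero, $M$ would be singular and $\phi$ would collapse dimensions, destroying the block structure.
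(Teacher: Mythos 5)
Your proof is correct and complete: identifying the column replacement with right multiplication by the invertible matrix $M$ (with $\det M = c_j \neq 0$) and noting that the resulting linear automorphism of $\F_q^n$ preserves dimension and containment is exactly the standard justification. The paper itself offers no proof, simply calling the lemma ``simple,'' so your argument supplies precisely the details the author intended.
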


Therefore, since
the first three columns of $X$ have rank three it follows that we can
form some specific three linear combinations, containing the 5th, the 6th, and the 7th column of $X$, respectively.
Each such linear combination will sum
to \emph{zero} for the related column of $X$.
We replace the 5th, 6th, and 7th columns of $X$
with these linear combinations, i.e. these columns are now all-zero columns in a 3-subspace which replaces $X$.
These three linear combinations are performed and replace the related columns
in all the $(q^6+q^5+q^4+q^3+q^2+q+1)(q^2-q+1)$ 3-subspaces of $\dS$.
By abuse of notation we call the new system also $\dS$. We note that after this was done, $Z_1$ was not
affected and it remains a 3-subspace of $\dS$. Hence, the two 3-subspaces
(starting with four all-zero columns and ending with such four columns, i.e. $Z_1$ and $Z_2$) can be
forced to be in $\dS$ which we do. As a consequence, all the consequences that we will derive
regarding the first $\ell$, $1 \leq \ell \leq 6$, columns of the 3-subspaces in $\dS$,
are also correct consequences concerning the last $\ell$ columns of these 3-subspaces.
Let $\T$ be the system formed from $\dS$ by performing puncturing three times
on the first three columns of all the 3-subspaces of $\dS$ (note that $\T$ is isomorphic to $\dS_q(2,3,7;4)$, but such system
was defined before only when the last columns are punctured).

Each pair of 1-subspaces of $\F_q^7$ which contain vectors which start with four \emph{zeroes} and vectors which end with four
\emph{zeroes} appear together in exactly one 3-subspace of $\dS$. There are no three such linearly independent vectors
in the same 3-subspace of $\dS$ since two such vectors (with either four leading \emph{zeroes} or four \emph{zeroes} at the tail)
will sum to another such vector and the result will be a 1-subspace in either $\dS_q(2,3,7;4)$ or $\T$, a contradiction.
Therefore, there are exactly $\frac{q^3-1}{q-1} \cdot \frac{q^3-1}{q-1}=(q^2+q+1)^2$ 3-subspaces which contain 1-subspace which has
a vector with four leading \emph{zeroes} and one 1-subspace which contains a vector which has four \emph{zeroes} at the tail.
Let $\A$ be the set of 3-subspaces of $\dS$ which form the $q^2 (q^2+1)(q^2+q+1)$ 2-subspaces in $\dS_q(2,3,7;4)$ and
let $\dB$ be the set of 3-subspaces of $\dS$ which form the $q^2 (q^2+1)(q^2+q+1)$ 2-subspaces in $\T$.
Clearly,
$$
|\A|=|\dB|=q^2 (q^2+1)(q^2+q+1),~~~ |\A \cap \dB|=(q^2+q+1)^2,~~~|\A \setminus\dB|=|\dB \setminus \A|=(q^2+q+1)(q^4-q-1)~~.
$$

Therefore, there are $(q^6+q^5+q^4+q^3+q^2+q+1)(q^2-q+1)-(2 \cdot (q^2+q+1)(q^4-q-1) + (q^2+q+1)^2+1+1)=q(q^7-q^5-q^4-2q^3+q^2+2q+2)$
3-subspaces in $\dS$ in which the projection
on the first four columns yields a 3-subspace of $\F_q^4$ and the projection
on the last four columns yields a 3-subspace in $\F_q^4$.

Finally, as we mentioned before, there are other possible 3-subspaces that can be imposed on $\dS$
in addition to $Z_1$. We will briefly mention one more such option. We are mainly interested in
3-subspaces which have four all-zero columns since we know the structure of the related
design formed by puncturing the three other columns. We claim that we can impose on $\dS$ to have
three such 3-subspaces (two of them are $Z_1$ and $Z_2$).
Let $Z_3$ be such 3-subspace that has all-zero columns in columns 1, 2, 6, and 7.
The proof that we can force $Z_1$, $Z_2$, and $Z_3$ to be together in $\dS$
is very similar to the one which forced $Z_1$ and $Z_2$ to be together in $\dS$.
For this purpose we consider the $(q^2+q+1)^2$ subspaces of $\A \cap \dB$. By considering
$\dS_q(2,3,7;1)$, we have that there are exactly
$q^5+q^3+q^2+1$ 3-subspaces in $\dS$ whose 4-th column is the all-zero column. We already proved that
if the 3-punctured $q$-Steiner system $\dS_q(2,3,7;4)$ contains the 0-subspace, then the system
is uniform and each 3-subspace is contained $q^4 (q-1)$ times. In one such 3-subspace the 4-th
column is the all-zero column. Hence, the other $q^5+q^3+q^2+1 -1 - q^4(q-1) =q^4+q^3+q^2$ 3-subspaces
with all-zero 4-th column are in $\A$. Since, $|\A \cap \dB|=(q^2+q+1)^2 > q^4+q^3+q^2$,
it follows that there exists a subspace in $\A \cap \dB$ whose 4-th column is a nonzero vector.
Now, we can permute the columns in all the system as follows. Columns 1, 2, and 3 are permuted in a way
that columns 3 and 4 in $Y$ will be linearly independent. Columns 5, 6, and 7 are permuted in a way
that columns 4 and 5 in $Y$ will be linearly independent.
Now, Lemma~\ref{lem:linear_comb} is applied to have all-zero columns 1 and 2 in $Y$ by using linear combinations
with columns 3 and 4. Similarly, Lemma~\ref{lem:linear_comb} is applied
to have all-zero columns 6 and 7 in $Y$ by using linear combinations
with columns 4 and 5. Note, that these operations do not affect $Z_1$ and $Z_2$.
The consequence is that the $q$-Steiner system $\dS_q(2,3,7)$ contains
$Z_1$, $Z_2$, and $Z_3$.

Can we have another 3-subspace in $\dS$ with four all-zero columns? We cannot give a definite answer to this
question. In such a 3-subspace, two all-zero columns must be in the first three columns, say columns 1 and 3, and the
two other all-zero columns in the last three columns, say columns 5 and 7.
Similarly, a fifth 3-subspace with four all-zero columns might be added.

Based on the forced structure described in this section, one can start a computer search to construct
the $q$-Fano plane for $q=2$. The outcome of such search is of great interest. We believe that the
structure that we found will make it easier to perform such a search.

\section{The 2-punctured $q$-Steiner system $S_q(2,3,7;5)$}
\label{sec:2punctured}

In this section we continue and present a possible structure for the $q$-Fano plane, namely,
we present a construction of a 2-punctured $q$-Steiner system $\dS_q(2,3,7;5)$.
We note that this is a possible substructure of the $q$-Steiner system $\dS_q(2,3,7)$
(first five columns of the system), but it is not forced like the systems described in Section~\ref{sec:structure},
and hence, it might not be possible to complete the constructed design into the $q$-Fano plane, even
if the related $q$-Fano plane exists. The construction is based on extensions for all the
subspaces of the 3-punctured $q$-Steiner system $\dS_q(2,3,7;4)$.

Let $\dS$ be a uniform 3-punctured $q$-Steiner system $\dS_q(2,3,7;4)$ with the uniform solution,
found in Section~\ref{sec:examples}, i.e. $X_{0,4}=1$, $X_{1,4}=0$, $X_{2,4}=q^2$ and $X_{3,4}=q^4(q-1)$.
We use four types of extensions as follows:

\noindent
{\bf Type 1:}
The unique 0-subspace of $\dS$ is extended in a unique way to a 1-subspace of $\F_q^5$

\noindent
{\bf Type 2:}
Each 3-subspace of $\F_q^4$ in $\dS$ can be extended in $q^3$ different ways (see Lemma~\ref{lem:addNone}).
We use each such extension $q(q-1)$ times in $S_q(2,3,7;5)$, i.e. each one of the
$\sbinomq{4}{3} q^3$ such 3-subspaces of $\F_q^5$ will appear $q(q-1)$ times in our
constructed $\dS_q(2,3,7;5)$.

\vspace{0.5cm}

There are $q^2 (q^2+1)(q^2+q+1)$ 2-subspaces in $S_q(2,3,7;4)$, i.e. $(q^2+1)(q^2+q+1)$
distinct 2-subspaces for which each one appears $q^2$ times in $\dS_q(2,3,7;4)$.
From this set of 2-subspaces there are $q^2 (q^2+1)q^2$ 2-subspaces which will be extended
to 3-subspaces of $\F_q^5$ and $q^2 (q^2+1) (q+1)$ which will be extended to 2-subspaces of $\F_q^5$.
There are a total of $(q^2+1)(q^2+q+1)$ distinct 2-subspaces in $\F_q^4$ that can be partitioned
into $q^2+q+1$ disjoint spreads, each one of size $q^2+1$~\cite{Bak76,Beu74}.
We partition these disjoint spreads into two sets, one set $A$ will
contain $q^2$ spreads and a second set $B$ will contain $q+1$ spreads.

\noindent
{\bf Type 3:}
Each 2-subspace in $\dS$ which is contained in a spread from the
set $A$ is extended in a unique way (see Lemma~\ref{lem:addOne}) to a 3-subspace in $S_q(2,3,7;5)$.
Thus, each such 3-subspace of $\F_q^5$ (there are $(q^2+1) q^2$ such
3-subspaces) will appear $q^2$ times in our constructed $S_q(2,3,7;5)$.

\noindent
{\bf Type 4:}
Each 2-subspace in $\dS$ which is contained in a spread from the
set $B$ is extended in a $q^2$ ways (see Lemma~\ref{lem:addNone}) to 2-subspaces in $S_q(2,3,7;5)$.
Thus, each one of these $q^2$ new 2-subspaces of $\F_q^5$ (there are $(q+1)(q^2+1)q^2$
such 2-subspaces) will appear exactly once in our constructed $S_q(2,3,7;5)$.

The proof that the constructed system is indeed a 2-punctured $q$-Steiner system $S_q(2,3,7;5)$
will be sketched now. First note that the 2-subspaces of $\F_q^7$ are 2-punctured into
the unique 0-subspace, the $\sbinomq{5}{1}$ one-subspaces, and the $\sbinomq{5}{2}$ two-subspaces
of $\F_q^5$. There is a unique way to extend the 0-subspace into 2-subspace of $\F_q^7$.
By Lemmas~\ref{lem:addNone} and~\ref{lem:addOne} there are $q^4$ different ways to
extend each two-subspace of $\F_q^5$ into a two-subspace of $\F_q^7$ and $q^2+q$
different ways to extend each one-subspace of $\F_q^5$ into a two-subspace of $\F_q^7$.
Hence, to complete the proof we have to show that each such subspace (0-subspace, one-subspace, or two-subspace)
of~$\F_q^5$ appears in the constructed system this required amount of times.
We will distinguish between four cases.

\noindent
{\bf Case 1:} The unique 0-subspace of $\F_q^5$ has a unique extension to a two-subspace
of $\F_q^7$ and it is covered by the subspace of Type 1.

\noindent
{\bf Case 2:} Type 1 also provides the $q^2 + q$ copies of the 1-subspace of $\F_q^5$
whose first four columns are zeroes. The other 1-subspaces of $\F_q^5$ can be obtained
only from Type 3 or Type~4. Each two-subspace of $\F_q^4$,
contains $\sbinomq{2}{1} =q+1$ one-subspaces of $\F_q^4$. Each spread contains each
such one-subspace exactly once. Each such one-subspace is extended to an one-subspace
of $\F_q^5$ only if the related two-subspace of $\F_q^4$ is extended to a two-subspace of $\F_q^5$.
This is done only in Type 4 (from the spreads of $B$). $B$ contains $q+1$ different spreads, each one
has $q^2$ identical copies. Each one-subspace of $\F_q^4$ appears in all these spreads $q^2 (q+1)$ times.
Since each such one-subspace of $\F_q^4$ is extended in $q$ different ways to one-subspaces of $\F_q^5$,
we have that each one-subspace of $\F_q^5$, whose first four columns are not all all-zero columns,
appears $q^2+q$ times in our system as required.

\noindent
{\bf Case 3:} We examine first the two-subspaces of $\F_q^5$ whose first four columns form
one-subspaces. Each one should appear $q^4$ times in our system. These two-subspaces are
formed only in Type 3, where two-subspaces from the spreads of $A$ are extended into 3-subspaces
of $\F_q^5$ (and the contained one-subspace are extended to
two-subspaces). There are $q^2$ spreads in~$A$, each one-subspace of $\F_q^4$ appears exactly
once in each one of them. There are $q^2$ identical copies for each such spread, so each
one-subspace of $\F_q^4$ appear $q^4$ times in these spreads. They are extended in a unique
way to two-subspaces of $\F_q^5$ and hence each appears $q^4$ times in our system as required.

\noindent
{\bf Case 4:}
For the two-subspaces of $\F_q^5$ which are extended from two-subspaces of $\F_q^4$
we can do similar counting. This is unnecessary as it is easy to see that they are equally
distributed in Type 2 and Type 3. They appear the required $q^4$ times since the other
subspaces were proved to appear the required number of times and these just complete the
total numbers which is dictated from the $q$-Steiner system $\dS_q(2,3,7)$.

\section{Conclusion}
\label{sec:conclude}

We have presented a new framework to examine the existence of $q$-Steiner systems.
Based on this framework we have defined a new set of $q$-designs which are
punctured $q$-Steiner systems. Necessary conditions for the existence of such designs were presented.
Several parameters where these new designs exist, were given. A recursive
construction for one set of parameter for such designs was given. For future research we would
like to find more properties of this framework. The main problem in this direction
is to find lower bounds on $m$ for any given $(n-m)$-punctured
$q$-Steiner system $\dS_q(t,k,n;m)$.

We have used the new framework as a basis to determine whether a $q$-Fano
plane, i.e. a $q$-Steiner system $\dS_q(2,3,7)$ exists. For small values of $q$,
probably only for $q=2$ this might help to determine the existence of such system
by using computer search. A short step, rather than a complete solution to the problem,
to make a progress in solving the existence problem, can be done in one of
the following directions:

\begin{enumerate}
\item Find a punctured $q$-Steiner system $\dS_q(2,3,7;6)$. First step in this
direction would be to consider $q=2$.

\item The subspaces of the set $\A \cup \dB$ might be a key for the whole construction
of $\dS_q (2,3,7)$. A possible first step might be to find the 231 subspaces of this set for $q=2$
and to extend this set of 231 3-subspaces with as many as possible more 3-subspaces (say $M$ 3-subspaces),
such that no 2-subspace of $\F_2^7$ appears in more than one of the $231+M$ 3-subspaces.
Of course, the two subspaces with four all-zero columns in the first
or last columns must be included in the $231+M$ 3-subspaces.

\item Another small step forward will be to settle the possibility of more than three
3-subspaces with four all-zero columns. It is either to prove that no more than three (four or five)3-subspaces
with four all-zero columns cannot exist in $\dS_q(2,3,7)$ or to prove that w.l.o.g. we can assume the existence
of four or five such 3-subspaces.
\end{enumerate}
\vspace{0.5cm}

Last, but certainly not the least, we can report that there
is a breakthrough that was made towards a construction
of a $q$-Fano plane in January 2017. Niv Hooker~\cite{Hoo17} has found
a punctured $q$-Steiner system $\dS_q(2,3,7;6)$ for $q=2$, by using the method developed in this
paper. This system is very interesting. It consists of a 21 2-subspaces of $\F_2^6$, which form
a spread. Each 2-subspace of $\F_2^6$ which is not part of the spread is contained in exactly four
3-subspaces of the system. This new finding give us a renew hope to construct a $q$-Fano plane.

\begin{center}
{\bf Acknowledgments}
\end{center}
The author want to thank anonymous reviewers whose comments taught him where
the earlier version of this paper needs to be amended. The final outcome wouldn't
have been possible without their important comments.
The author would also like to thank the COST Action IC1104 on "Random Network Coding
and Designs Over GF(q)" for providing funding for workshops on this topic of research.



\begin{thebibliography}{99}
\bibitem{AAK01}
     {\sc R. Ahlswede, H. K. Aydinian, and L. H. Khachatrian,}
     {\sl On perfect codes and related concepts,}
     {\em Designs, Codes, and Cryptography,} 22 (2001), 221--237.
\bibitem{Bak76}
     {\sc R. D. Baker,}
     {\sl Partitioning the planes AG$_{2m}$(2) into 2-designs},
     {\em Discrete Math.}, 15 (1976), 205--211.
\bibitem{Beu74}
     {\sc A. Beutelspacher,}
     {\sl On parallelisms in finite projective spaces,}
     {\em Geometriae Dedicata,} 3 (1974), 35--40.
\bibitem{BEOVW}
     {\sc M. Braun, T. Etzion, P. R. J. \"Osterg\aa rd, A. Vardy, and A. Wassermann}
     {\sl Existence of $q$-Analogs of Steiner Systems,}
     {\em Forum of Mathematics, Pi}, 4 (2016), 1--14.
\bibitem{BKN15}
     {\sc M. Braun, M. Kiermaier, and A. Naki\'{c}}
     {\sl On the automorphism group of a binary $q$-analog of the Fano plane,}
    {\em European Journal of Combinatorics,} 51 (2016), 443--457.
\bibitem{BSSS}
    {\sc A. E. Brouwer, James B. Shearer, N. J. A. Sloane, and Warren D. Smith,}
    {\sl A new table of constant weight codes,}
    {\em IEEE Trans. on Inform. Theory},  36 (1990), 1334--1380.
\bibitem{Cam74}
    P.\,{\sc Cameron},
    Generalisation of Fisher's inequality to fields with more than one element,
    in T.P.\,{\sc McDonough} and V.C.\,{\sc Mavron}, Eds.,
		\emph{Combinatorics},
    London Math.\ Soc.\ Lecture Note Ser.\ 13
    Cambridge Univ.\ Press, Cambridge,		
    1974, pp.~9--13.
\bibitem{Cam74a}
    {\sc P. Cameron},
    {\sl Locally symmetric designs},
    {\em Geometriae Dedicata,} 3 (1974), 65--76.
\bibitem{Chi87}
    {\sc L. Chihara,}
    {\sl On the zeros of the Askey-Wilson polynomials, with
        applications to coding theory,}
    {\em SIAM J. Math. Anal.,} 18 (1987), 191--207.
\bibitem{CoDi07}
    C. J. Colbourn and J. H. Dinitz,
    {\em Handbook of Combinatorial Designs},
    Boca Raton, Florida: Chapman and Hall/CRC, 2007.
\bibitem{Del73}
    {\sc P. Delsarte},
    {\sl An algebraic approach to the association schemes of coding theory},
    {\em Philips Res. Repts. Suppl.,} 10 (1973), 1--97.
\bibitem{Del76}
    {\sc P. Delsarte},
    {\sl Association schemes and $t$-designs in regular semilattices},
    {\em Journal of Combinatorial Theory, Series A,} 20 (1976), 230--243.
\bibitem{Etz96}
    {\sc T. Etzion,}
    {\sl On the nonexistence of perfect codes in the Johnson
     scheme,}
    {\em SIAM Journal on Discrete Mathematics,} 9 (1996), 201--209.
\bibitem{Etz13}
     {\sc T. Etzion,}
     {\sl Problems on q-analogs in coding theory,}
     {\em arxiv.org/abs/1305.6126, May 2013}.
\bibitem{Etz14}
    {\sc T. Etzion,}
    {\sl Covering of subspaces by subspaces,}
    {\em Designs, Codes, and Cryptography,} 72 (2014), 405--421.
\bibitem{EtSc04}
    {\sc T. Etzion and M. Schwartz,}
    {\sl Perfect constant-weight codes,}
    {\em IEEE Transactions on Information Theory,} IT-50 (2004), 2156--2165.
\bibitem{EtSi09}
    {\sc T. Etzion and N. Silberstein,}
    {\sl Error-correcting codes in projective space via rank-metric codes and Ferrers
         diagrams,}
    {\em IEEE Trans.\ Inform. Theory}, 55 (2009), 2909--2919.
\bibitem{EtSi13}
    {\sc T. Etzion and N. Silberstein,}
    {\sl Codes and designs related to lifted MRD codes,}
    {\em IEEE Trans.\ Inform. Theory}, 59 (2013), 1004--1017.
\bibitem{EtVa11}
    {\sc T. Etzion and A. Vardy,}
    {\sl Error-correcting codes in projective spaces,}
    {\em IEEE Trans.\ Inform. Theory}, 57 (2011), 1165--1173.
\bibitem{EtVa11a}
     {\sc T. Etzion and A. Vardy,}
     {\sl On $q$-analogs for Steiner systems and covering designs,}
     {\em Advances in Mathematics of Communications}, 5 (2011), 161--176.
\bibitem{HeSi16}
     {\sc O. Heden and P. A. Sissokho,}
     {\sl On the existence of a (2,3)-spread in $V(7,2)$,}
     {\em Ars Combinatoria}, 124 (2016), 161--164.
\bibitem{Hoo17}
     {\sc N. Hooker,}
     Personal communication, January 2017.
\bibitem{Kee14}
     {\sc P. Keevash,}
     {\sl The existence of designs,}
     {\em arxiv.org/abs/1401.3665, January 2014}.
\bibitem{KiLa15}
     {\sc M. Kiermaier and R. Laue,}
     {\sl Derived and residual subspace designs,}
     {\em Advances in Mathematics of Communications}, 9 (2015), 105--115.
\bibitem{KiPa15}
     {\sc M. Kiermaier and M. O. Pav\v{c}evi\'{c},}
     {\sl Imntersection numbers for subspace designs,}
     {\em Journal of Combinatorial Designs}, 23 (2015), 463--480.
\bibitem{KoKs08}
    {\sc R. Koetter and F. R. Kschischang,}
    {\sl Coding for errors and erasures in random network coding,}
    {\em IEEE Trans. on Inform. Theory},  54 (2008), 3579--3591.
\bibitem{MaZh95}
    {\sc W. J. Martin and X. J. Zhu,}
    {\sl Anticodes for the Grassmann and bilinear forms graphs,}
    {\em Designs, Codes, and Cryptography,} 6 (1995), 73--79.
\bibitem{McSl77}
     {\sc F. J. MacWilliams and N. J. A. Sloane,}
     {\em The Theory of Error-Correcting Codes},
     {\sl Amsterdam: North-Holland, 1977}.
\bibitem{Met99}
    {\sc K. Metsch},
	{\sl Bose-Burton type theorems for finite projective, affine and polar spaces,}
		in J.D.\,{\sc Lamb} and D.A.\,{\sc Preece}, Eds.,
    {\em Surveys in Combinatorics, 1999},
                London Math.\ Soc.\ Lecture Note Ser.\ 267,
		Cambridge Univ.\ Press, Cambridge, pp.~137--166, 1999.
\bibitem{ScEt}
    {\sc M. Schwartz and T. Etzion,}
    {\sl Codes and anticodes in the Grassman graph,}
    {\em Journal Combinatorial Theory, Series A,} 97 (2002), 27--42.
\bibitem{Suz90}
    {\sc H. Suzuki,}
    {\sl On the inequalities of $t$-designs over a finite field,}
    {\em European Journal of Combinatorics,} 11 (2014), 601--607.
\bibitem{Tho96}
    {\sc S. Thomas,}
    {\sl Designs and partial geometries over finite fields,}
    {\em Geometriae Dedicata,} 63 (1996), 247--253.
\bibitem{vLWi92}
    {\sc J. H. van Lint and R. M. Wilson,}
    {\em A course in Combinatorics},
    Cambridge University Press, 1992.
\end{thebibliography}
\end{document}